\tikzstyle{commdiag}=[matrix of math nodes, row sep=3em, column sep=2.5em, text height=1.5ex, text depth=0.25ex,ampersand replacement=\&]
\tikzset{>=stealth}
\colorlet{emphcolor}{FireBrick}
\tikzset{treeemph/.style={draw=emphcolor, every node/.style={dtree black node, fill=emphcolor}}}
\newcommand*\ATb{\begin{tikzpicture}[setree]\placeroots{1}\end{tikzpicture}}
\newcommand*\ATbb{\begin{tikzpicture}[setree]\placeroots{1}\children{child{node{}}}\end{tikzpicture}}
\newcommand*\ATbapab[1][]{\begin{tikzpicture}[setree, #1]\placeroots{2}\jointrees{1}{1}\end{tikzpicture}}
\newcommand*\demph[1]{\textbf{#1}}
\newcommand*\Ra{{\mathcal R}}
\newcommand*\RR{\mathbf{R}}
\newcommand*\NN{\mathbf{N}}
\newcommand*\Man{\mathcal{M}}
\newcommand*\Rd{\RR^d}
\newcommand*\Tan[1][]{\mathsf{T}_{#1}}
\NewDocumentCommand\diff{O{\Man}}{\mathfrak{X}\IfNoValueTF{#1}{(\Man)}{(#1)}}
\NewDocumentCommand\Diff{O{\Man}}{\group{Diff}\IfNoValueTF{#1}{(\Man)}{(#1)}}
\newcommand*\eldiff{\mathcal{F}}
\newcommand*\Cinf{\mathcal{C}^{\infty}}
\newcommand*\Cinfloc{\locmark{\mathcal{C}}^{\infty}}
\newcommand*\Lin{\mathcal{L}}
\newcommand*\locmark[1]{\accentset{\bullet}{#1}}
\newcommand*\Linloc{\locmark{\mathcal{L}}}
\NewDocumentCommand\linloc{O{\vecbndl}O{\vecbndl[F]}}{%
	\Linloc_{\symgrp}\paren[\big]{\sym^m \diff[\Rd], \diff[\Rd]}
		}
\newcommand*\group[1]{\mathsf{#1}}
\newcommand*\Aff{\group{Aff}(d)}
\newcommand*\GL[1][d]{\group{GL}(#1)}
\newcommand*\SO[1][d]{\group{SO}(#1)}
\newcommand*\SE[1][d]{\group{SE}(#1)}
\newcommand*\Ebv{\boldsymbol{E}}
\newcommand*\Ev{M}
\newcommand*\Fbv{\boldsymbol{F}}
\newcommand*\Fv{M}
\newcommand*\Mv{M}
\newcommand*\vecbndl[1][E]{\mathcal{#1}}
\newcommand*\bndlproj[2][\Man]{#1 \leftarrow #2}
\newcommand*\sects[2][\Man]{\Cinf(\bndlproj[#1]{#2})}
\newcommand*\fibre[2]{#2_#1}
\newcommand*\Meth{\Phi}
\newcommand*\meth{\varphi}
\newcommand*\symgrp{\group{G}}
\newcommand*\isogrp{\group{H}}
\newcommand*\one{\mathsf{1}}
\newcommand*\act{\cdot}
\newcommand*\mat{\mathsf{A}}
\newcommand*\trans{\mathsf{b}}
\newcommand*\Div{\operatorname{div}}
\newcommand*\Tr{\operatorname{Tr}}
\newcommand*\dd{\mathrm{d}}
\newcommand*\Der{\operatorname{D}}
\newcommand*\supp{\operatorname{supp}}
\newcommand*\inv{^{-1}}
\newcommand*\origin{\mathsf{o}}
\newcommand*\Jetx[2][k]{J^{#1}_{#2}{\vecbndl}}
\newcommand*\Jet[1][k]{\Jetx[#1]{}}
\newcommand*\jet[1][\origin]{\operatorname{j}_{#1}^k}
\newcommand*\lfunc[1][\origin]{\overline\func_{#1}}
\newcommand*\facfunc[2][\origin]{\lfunc[#1]\paren[\big]{\jet[#1](#2)}}
\RenewDocumentCommand\Vec{O{\prt}}{\mathcal{V}_{#1}}
\NewDocumentCommand\Sym{O{\prt}}{\mathcal{S}_{#1}}
\NewDocumentCommand\Ten{O{\prt}}{\mathcal{T}_{#1}}
\NewDocumentCommand\tenexpr{O{\Mv}O{\Mv}O{\sym}}{#2 \otimes \bigotimes_{j=0}^{\infty} #3^{\prt(j)}(#1^* \otimes #3^j \Mv)}
\NewDocumentCommand\Dit{O{\prt}}{\Diag[#1]^1}
\NewDocumentCommand\Diag{O{\prt}}{\Gamma_{#1}}
\NewDocumentCommand\diag{}{\gamma}
\NewDocumentCommand\dit{}{\gamma}
\newcommand{\func}{\varphi}
\newcommand*\sym{{S}}
\newcommand*\ten{{T}}
\newcommand*\prt{\kappa}
\NewDocumentCommand\Prt{O{m}}{K_{#1}}
\title{Aromatic Butcher Series}
\author{Hans Munthe-Kaas}
\author{Olivier Verdier}
\affil{Department of Mathematics, University of Bergen, Norway}
\begin{document}

\maketitle


\begin{abstract}
	We show that without other further assumption than affine equivariance and locality, a numerical integrator has an expansion in a generalized form of Butcher series (B-series) which we call \emph{aromatic B-series}.
	We obtain an explicit description of aromatic B-series in terms of elementary differentials associated to \emph{aromatic trees}, which are directed graphs generalizing trees.
%
%
	We also define a new class of integrators, the class of \emph{aromatic Runge--Kutta methods}, that extends the class of Runge--Kutta methods, and have aromatic B-series expansion but are not B-series methods.
	Finally, those results are partially extended to the case of more general affine group equivariance.
\end{abstract}

\begin{description}
	\item[Keywords] B-Series · Butcher series · Equivariance · Aromatic series · Aromatic trees · Functional graph · Directed pseudo-forest
\item[Mathematics Subject Classification (2010)]  37C80 · 37C10 · 41A58 · 15A72
\end{description}

\tableofcontents

\section{Introduction}

Numerical integration of differential equations is the art of approximating the exponential map, sending vector fields to their flows. A fundamental property of the exponential map is its \emph{equivariance} with respect to the full group of diffeomorphisms on the domain. This means that if a vector field is transformed by a diffeomorphism and afterwards exponentiated, we obtain exactly the same result as if the original vector field is exponentiated and the result is transformed by the  given  diffeomorphism. This complete equivariance of the exponential map is not shared by any approximation.

We are in particular interested in a class of methods called \emph{B-series methods} \cite[\S\,III.1.2]{HaLuWa06}.
These methods are characterized by a special kind of expansion.
In order to describe the expansion,
we take the point of view of backward error analysis 
\cite[\S\,IX]{HaLuWa06}.
The idea is that an integrators solves exactly the flow a modified version $\tilde{f}$ of the original vector field $f$.
B-series methods have the property that this modified vector field $\tilde{f}$ can be expanded in a specific form:
\begin{align}
	\label{eq:treeexp}
	\tilde{f} = b_0 f + b_1 f'(f) + b_2 f''(f,f) + b_3 f'(f'(f)) + \cdots
	,
\end{align}
where the terms are indexed by rooted trees \cite[\S\,III.1.2]{HaLuWa06}, \cite{Ha94}.
The rooted trees corresponding to the terms in \eqref{eq:treeexp} are
\begin{align}
	\ATb, \quad \ATbb, \quad 
	\begin{tikzpicture}[setree]
		\placeroots{1}
		\children{child{node{}} child{node{}}}
	\end{tikzpicture},\quad
\begin{tikzpicture}[setree]
	\placeroots{1}
	\children[1]{child{node{} child{node{}}}}
\end{tikzpicture}
,
\quad
\ldots
\end{align}
The class of B-series methods was defined after realising that Runge--Kutta methods have such an expansion in \cite{Bu72}.

McLachlan and Quispel observed that any Runge--Kutta method is equivariant with respect to the full affine linear group acting upon $\RR^n$ \cite{MLQu01}. McLachlan further observed that this holds for B-series in general, and he asked the natural question: \emph{"Can any affine equivariant method be expanded in a B-series?"}. This has been an important open question for many years. 

We answer the question by showing that the local and affine equivariant methods, in some fixed dimension, have an expansion which, however, turns out to be \emph{more general} than B-series expansion.
The first few terms of the expansion of a local, affine equivariant method, are of the form
\begin{align}
	\label{eq:aromexpansion}
	\meth(f) = b_0 f + b_1 f'(f) + b_2{\color{emphcolor}  \Div(f) f} + b_3 f''(f,f) + b_4 {\color{emphcolor} (\operatorname{grad} \Div(f), f) f}\cdots
\end{align}
By comparing with \eqref{eq:treeexp}, one observes that terms of a new kind appear, such as $ \Div(f) f$.
We are able to completely describe those terms, and they turn out to be
associated to \emph{aromatic trees}, which generalize rooted trees.
The aromatic trees corresponding to the terms in \eqref{eq:aromexpansion} are the following, where the new term are emphasized:
\begin{align}
	\ATb, \quad \ATbb, \quad \ATbapab[treeemph], \quad
	\begin{tikzpicture}[setree]
		\placeroots{1}
		\children{child{node{}} child{node{}}}
	\end{tikzpicture},\quad
\begin{tikzpicture}[setree, treeemph]
	\placeroots{2}
	\children[1]{child{node{}}}
	\jointrees{1}{1}
\end{tikzpicture}
,
\quad
\ldots
\end{align}
In \autoref{sec:introaffequiseries} we give a detailed description of aromatic trees and of the corresponding terms appearing in \eqref{eq:aromexpansion}.
In \autoref{sec:equirk} we also give some concrete examples of such methods, which are local, affine equivariant but not B-series. 
In \autoref{sec:introaffequiseries}, we also give an example of a theoretical, non-local and affine equivariant method, which does not have any expansion at all.

The traditional approach, for B-series and other similar series such as P-series, has been to focus on one particular family of methods, such as Runge--Kutta \cite{Bu72}, Rosenbrock methods \cite[\S\,IV.7]{HaWa10} or the average vector field methods \cite{QuML08}, and to show by direct calculations that they have a B-series expansion.
The biggest novelty in this paper is that we are able to find a series expansion for a class of methods \emph{which is not described explicitly}.
We only assume two very simple criteria, namely locality and affine equivariance, which are straightforward to check, and those immediately ensure the existence of a series expansion.

We introduce the following new ideas in the paper.

In \autoref{sec:equivariance}, we give a general definition of equivariance for numerical methods and modified vector fields which extends the equivariance concept of \cite{MLQu01}.
In particular, the property of affine equivariance means that the method is immune to affine coordinate transformations.
In other words, making an affine change of coordinate before or after the computation gives the same result.
Affine transformations consist of translations, scalings, rotations, and shearing.
Equivariance with respect to translation means indifference with respect to the choice of origin.
Equivariance with respect to scalings means indifference with respect to the choice of units, a fundamental property from a physicist's point of view.
Not all methods are affine equivariant. For instance, \emph{partitioned} Runge--Kutta are not affine equivariant because they are not equivariant with respect to all rotations.
Partitioned Runge--Kutta methods are instead equivariant with respect to a smaller group, the partition group, see \autoref{sec:affine}; in particular, they are still equivariant with respect to translations and scalings.

We also introduce in \autoref{sec:locality} the concept of \emph{locality} of a method, and the corresponding notion for modified vector fields.
Locality means that the method exactly preserves non-isolated fixed points of the flow.
One can readily check that Runge--Kutta methods, and variants such as partitioned Runge--Kutta methods, are indeed local.

Lastly, we make the observation that a B-series can be regarded as a Taylor development in infinite dimensional space of vector fields, which contrasts to the traditional view of a Taylor expansion with respect to the time step.

Besides these new points of view, the main result relies on two classical theorems; the Peetre theorem of functional analysis and the
invariant tensor theorem of representation theory. 
Peetre's theorem  guarantees that local, multilinear maps actually depend only on the jet-space coordinates, up to a finite order, at the point at hand.
The invariant tensor theorem guarantees that equivariant multilinear maps decompose into a $\RR$-linear sum of permutations. 
This yields an equivalence between equivariant Taylor series and aromatic B-series represented in terms of directed graphs.

\subsection{Outline of the Paper}

In \autoref{sec:mainresults} we spell out the main results of the paper, as well as some of the main definitions and notations used throughout the paper.
In \autoref{sec:contagion} we show how the study of Taylor terms of local, equivariant mapping reduces to the study of symmetric, multilinear, local, equivariant mappings.
The essential argument is that if a mapping is local and equivariant, then so are all its derivatives at zero.
In \autoref{sec:extension}, we use multilinearity to further simplify the description of symmetric, multilinear, local, equivariant maps, by appealing to Peetre's theorem.
Roughly speaking, it allows us to say that a local, multilinear map depends in fact on the Taylor development of the section at hand.
In \autoref{sec:invsubspaces} we decompose the spaces of symmetric, multilinear maps defined on the jet space at a point, into invariant subspaces.
In \autoref{sec:equivarianthomogeneous}, we reap the results of the preceding sections in the homogeneous space case.
Our main result in that section is that under some mild assumptions, the Taylor development of a local and equivariant mapping is finite dimensional.
In \autoref{sec:affineequivariant}, in the affine case we are able to refine our description of those Taylor terms using aromatic trees, which are a generalisation of trees used to describe B-series.
We conclude in \autoref{sec:conclusion} by some remarks on future work.

\section{Main Result}
\label{sec:mainresults}

\subsection{Exponential Mapping}

We denote the set of diffeomorphisms of a manifold $\Man$ by $\Diff$.
The space of vector fields is denoted by $\diff$.
We will also use the description of vector fields as sections of the tangent bundle $\Tan\Man$, so we will write indifferently
\begin{equation}
	\label{eq:diffsect}
	\diff = \sects{\Tan\Man}
	.
\end{equation}
The exact solution at time one of a differential equation defined by a vector $f\in\diff$ is denoted by the mapping 
\begin{equation}
\exp\colon\diff \to \Diff
.
\end{equation}
In other words, $\exp$ is defined by the property that
\begin{equation}
	\begin{cases}
		u'(x) = f(x) & \\
		u(0) = x_0
	\end{cases}
	\iff
	\exp(f)(x_0) = u(1)
\end{equation}

A numerical integrator is an approximation of the exponential, so it is another mapping
\begin{equation}
	\Meth\colon \diff \to \Diff
	.
\end{equation}

\subsection{Backward Error Analysis}

The general idea of backward error analysis is that a method applied to a vector field $f$ actually solves exactly the flow of another, modified vector field $\tilde{f}$.
It is an extremely powerful tool for the analysis of numerical integrators.
We refer to \cite[\S\,IX]{HaLuWa06} for historical notes and further references.

With respect to the exponential mapping, the backward error analysis mapping is a mapping
\begin{equation}
\meth\colon\diff \to \diff
\end{equation}
such that
\begin{equation}
	\Meth(f) = \exp\paren[\big]{\meth(f)} \qquad f \in \diff
	.
\end{equation}
We call this mapping the \demph{modified vector field} mapping for the method $\Meth$.

Note that a modified vector field generally only exists as a formal power series, and not as a mapping as above.
However, in the sequel, we will focus on properties of general smooth mappings from $\diff$ to itself, independently of its relation to a numerical integrator.
So, we prove a rigorous result on such maps, without actually proving the claims of the introduction.
This is because introducing the complete machinery of backward error analysis in this paper would bring us too far from the focus of the paper.

\subsection{Locality}
\label{sec:locality}

The function $\exp$ is \emph{local}, which means that if a vector field $f\in\diff$ vanishes in a neighbourhood of a point $x_0\in\Man$, then $\exp(f)(x_0) = x_0$.
We thus define \demph{locality} as
\begin{equation}
	\text{$f\in\diff$ vanishes on a neighbourhood of $x_0\in\Man$} \implies \Meth(f)(x_0) = x_0
	.
\end{equation}
The property above means that $\Meth$ computes the exact solution at a point in a neighbourhood of which the vector field is zero.

Locality is a very natural property for an integrator.
Many integrators are local, with the notable exception of integrators obtained by splitting.

On the backward analysis side, the definition is even simpler.
We have to define the \demph{support} of a vector field $f\in\diff$ (a particular case of \autoref{def:support}) by
\begin{equation}
	\supp(f) \coloneqq \overline{\setc[\big]{x \in \Man}{f(x) \neq 0}}
	.
\end{equation}
A function $\meth\colon\diff\to\diff$ is \demph{local} (a particular case of \autoref{def:locality}) if it is \emph{support non increasing}, that is
\begin{equation}
	\supp\paren[\big]{\meth(f)} \subset \supp(f)
	,
	\qquad
	f\in\diff
	.
\end{equation}
It is not difficult to see that if $\meth$ is local, then so is the corresponding method $\exp\circ\meth$.

\subsection{Equivariance}
\label{sec:equivariance}

Another property of the exponential map is that it is \demph{equivariant} with respect to diffeomorphisms.
This means that for any diffeomorphism $\psi \in\Diff$ we have
\begin{equation}
	\exp(\psi \act f) = \psi \circ \exp(f) \circ \psi\inv
	,
	\qquad
	f \in \diff
	,
	\quad
	\psi \in \Diff
	,
\end{equation}
where $\psi\act f$ is the natural action of diffeomorphisms on vector fields given by 
\begin{align}
	\label{eq:groupvfaction}
	(\psi\act f)(x) = \Tan\psi \paren[\big]{f(\psi\inv(x))}
	.
\end{align}
Equivariance with respect to all diffeomorphisms is too much to ask for a numerical method, since it would imply that the method computes the exact solution of all differential equations.
We thus restrict the study to a finite dimensional Lie subgroup $\symgrp \subset \Diff$, i.e., we assume that a Lie group $\symgrp$ acts on $\Man$.
Equivariance is now written as
\begin{equation}
	\label{eq:Methequiv}
	\Meth(g \act f) = g \circ \Meth(f) \circ g\inv
	,
	\qquad
	f \in \diff
	,
	\quad
	g \in \symgrp
	.
\end{equation}

That property is also easy to express on the backward error analysis side.
We say that a mapping $\meth\colon\diff\to\diff$ is \demph{$\symgrp$-equivariant} (a particular case of \autoref{def:equivariance}) if
\begin{equation}
	\meth(g\act f) = g \act \meth(f)
	,
	\qquad
	f \in \diff
	,
	\quad
	g \in \symgrp
	.
\end{equation}
Again, one can check that if $\meth$ is equivariant, then $\exp\circ\meth$ is equivariant in the sense of \eqref{eq:Methequiv}.

\subsection{Affine Spaces}
\label{sec:affine}

In this paper, we are mostly concerned with the manifold
\begin{align}
	\Man = \RR^d
\end{align}
and with groups $\symgrp$ of the form
\begin{align}
	\symgrp = \isogrp \ltimes \RR^d
\end{align}
where $\isogrp$ is a subgroup of $\GL$.

Some examples of choices for $\isogrp$ and the corresponding group $\symgrp$ are the following.
\begin{center}
\begin{tabular}{lll}
	\toprule
	Name &  $\isogrp$ & $\symgrp = \isogrp\ltimes\RR^d$\\
	\midrule
	Affine & $\GL$ & $\Aff$ \\
	Special Affine & $\mathsf{SL}(d)$ & \\ 
	Euclidean & $\SO$ & $\SE$\\
	Galilean & $\group{O}(d-1) \ltimes \RR^{d-1}$ &  \\
	Poincaré & $\group{O}(d-1, 1)$ & \\
	Partition & $\GL[d_1]\times\cdots\times\GL[d_n]$ & \\
	Translation & $\one$ & $\RR^d$\\
	\bottomrule
\end{tabular}
\end{center}

The action of an element $(\mat,\trans) \in\symgrp$ on $\RR^d$ is simply
\begin{align}
(\mat, \trans)\cdot x \coloneqq \mat x + \trans
,
\end{align}
where $\mat\in\isogrp\subset\GL$ and $\trans\in\RR^d$.
The induced action of $\symgrp$ on a vector field $f\in\diff[\RR^d]$ given by \eqref{eq:groupvfaction} is in this case
\begin{align}
	\label{eq:affinevecact}
	\paren[\big]{(\mat,\trans) \cdot f}(x) \coloneqq \mat f \paren[\big]{\mat\inv (x - \trans)}
.
\end{align}

\subsection{Aromatic B-series: a Generalisation of B-series}
\label{sec:introaffequiseries}

A particular example we have in mind is the case $\isogrp = \GL$. It means that $\symgrp$ is the full group of affine transformations, that is $\symgrp = \Aff$ and $\Man = \RR^d$.
We observe the following fact, which is obtained by inspection using \eqref{eq:affinevecact}:
\begin{proposition}
	Runge--Kutta methods are local and affine equivariant.
\end{proposition}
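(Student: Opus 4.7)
The plan is to verify both claims by direct inspection of the defining recurrence of an $s$-stage Runge-Kutta method. For a vector field $f \in \diff$, initial point $x_0 \in \RR^d$, and step size $h$, I would write the stages as $K_i = f(x_0 + h \sum_j a_{ij} K_j)$ and the update as $\Meth(f)(x_0) = x_0 + h \sum_i b_i K_i$, with the method coefficients $(a_{ij})$ and weights $(b_i)$ fixed throughout.

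For locality, I would assume $f$ vanishes on a neighborhood $U$ of $x_0$ and observe that $K_i = 0$ for every $i$ is a consistent solution of the stage system, since each argument then reduces to $x_0 \in U$ where $f$ is zero. This is the branch we select: it is the unique solution for explicit methods and it is singled out by continuity in $h$ at $h = 0$ for implicit ones. The update equation then collapses to $\Meth(f)(x_0) = x_0$.

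For affine equivariance, I would take $g = (\mat, \trans) \in \Aff$, set $\tilde f = g \act f$, and apply the method to $\tilde f$ starting at $\tilde x_0 = \mat x_0 + \trans$, writing $\tilde K_i$ for the resulting stages. The main step is the ansatz $\tilde K_i = \mat K_i$: using the form of the affine action \eqref{eq:affinevecact}, the translation $\trans$ cancels inside the argument of $\tilde f$ and the factor $\mat$ threads through, so that $\mat K_i$ does solve the stage equations for $\tilde f$. Substituting into the update yields $\Meth(\tilde f)(\tilde x_0) = \mat(x_0 + h \sum_i b_i K_i) + \trans = g \act \Meth(f)(x_0)$, which rearranges to the conjugation identity \eqref{eq:Methequiv}.

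The main obstacle is minimal: both statements are structural properties of the Runge-Kutta recurrence, and the only subtlety is the choice of branch of the implicit stage equations, resolved by taking the $h$-continuous branch passing through $K_i = 0$. Once that convention is fixed, both verifications reduce to straightforward substitution.
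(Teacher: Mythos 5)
Your proposal is correct and follows exactly the route the paper intends: the paper offers no written proof beyond the remark that the proposition ``is obtained by inspection using \eqref{eq:affinevecact}'', and your direct substitution into the Runge--Kutta stage equations (with the ansatz $\tilde K_i = \mat K_i$ for equivariance and $K_i = 0$ for locality) is precisely that inspection, carried out in full. Your handling of the branch choice for implicit stage equations is a sensible extra precaution that the paper glosses over.
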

%

Our main result relates the locality and affine equivariance property to the development in series, which are generalisations of B-series.
Recall that elementary differentials for B-series are encoded in trees \cite{Bu72}.
We generalize trees into aromatic trees\footnote{The nomenclature ``aromatic tree'' appears in \cite{ChMu07}, to denote what we call an ``aroma'' in  \autoref{sec:aromaticforests}.}, which are defined as follows.

An \demph{aromatic forest} is a (directed, finite) graph with at most one outgoing edge from each node.
An \demph{aromatic tree} is an aromatic forest with $n$ nodes and $n-1$ arrows.

\begin{remark}
Before we proceed further, a comment on terminology is in order.
An aromatic forest as we define it, is also called \emph{functional graph} in \cite[\S\,3.4]{Be91}.
This is, however, confusing as functional graphs are otherwise assumed to have no roots (so they really correspond to the graph of a function defined on the vertices).
Another denomination for an aromatic forest is a \emph{directed pseudo-forest} \cite{GaTa88}.

To add to the confusion, the connected components of aromatic forests with no root, which we call ``aromas'' (see \autoref{prop:aforreststruct}) have been previously considered in the geometric integration and graph theory literature with conflicting names.
In \cite{ChMu07}, they are called \emph{aromatic trees}, and in \cite{IsQuTs07}, they are called \emph{$K$-loops}.
However, that concept arose much earlier in \cite{GaTa88} under the names of \emph{directed pseudo-tree}, \emph{directed unicyclic graph} or \emph{directed 1-tree}.
The denomination ``pseudo-tree'' is also confusing as pseudo-trees have other definitions, for instance as directed graphs excluding the possibility of loops \cite[\S\,II.9]{KuMo76}.
\end{remark}


Given a vector field $f$,
to any aromatic forest $\gamma$ there corresponds an elementary differential $\eldiff(\gamma)$.
The recipe is to label each node, forming for each node with label $j$ the quantity $f^j_{i_1 i_2\ldots i_n}$ where $i_1\ldots i_n$ are the labels of the nodes pointing towards the node $j$, and finally to sum all the terms using Einstein's summation convention. The upper index on $f$ corresponds to the vector components and the lower are partial derivatives with respect to the coordinate directions, 
$f^j_{i_1 i_2\ldots i_n} = \partial^n f^j/\partial x_{i_1}\ldots\partial x_{i_n}$.

When the aromatic tree is a (regular) tree, we obtain the same elementary differential as for B-series.
An elementary example is the following tree, with four nodes and three arrows:
\begin{equation}
	\diag = \quad
	\begin{tikzpicture}[baseline=(tree1.south)]
		\begin{scope}[etree, scale=1.5]
			\placeroots{1}
			\children{child{node(i){} child{node(j){}}} child{node(m){}}}
	\end{scope}
	\node[left] at (i) {$i$};
	\node[left] at (j) {$m$};
	\node[right] at (m) {$j$};
	\node[left] at (tree1) {$k$};
\end{tikzpicture}
\quad\begin{tikzpicture}[>=stealth]\draw[->, thick](0,0) -- (1,0);\end{tikzpicture}\quad
\eldiff(\diag) = 
f^k_{ij}f^i_{m}f^mf^j
\end{equation}

Note that we do not put the restriction that the aromatic tree be a tree in the regular sense.
Here comes an example of an aromatic forest with eight nodes and seven arrows (and thus an aromatic tree) which contains a loop.

On the following example \eqref{eqex:diageldiff}, we emphasize the node labeled ``$p$'', the arrows pointing towards it, and the corresponding term in the elementary differential.
\begin{equation}
	\label{eqex:diageldiff}
	\diag = \quad 
\begin{tikzpicture}[baseline=(tree1.south)]
	\begin{scope}[etree, scale=1.5, ]
	\placeroots{4}
	\children[1]{child{node(i){}}}
	\children[2]{child{node(j){}}}
	\jointrees{2}{4}
	\begin{scope}[branch/.style={}]
	\children[3]{child{node(m){}} child{node(n){}}}
\end{scope}
	\begin{scope}[ultra thick, color=emphcolor]
	\joinmid{3}{4}
	\end{scope}
	\node[dtree node, color=emphcolor, scale=4] at (tree3){};
	\draw[emphcolor, diredge, ultra thick](tree3) -- (m);
	\draw[emphcolor, diredge, ultra thick](tree3) -- (n);
	\end{scope}
	\node[above] at (i) {$i$};
	\node[above] at (m) {$m$};
	\node[above] at (n) {$n$};
	\node[below, emphcolor] at(tree3) {$\boldsymbol{p}$};
	\node[above] at (tree4) {$q$};

	\node[left] at (tree1) {$k$};
	\node[above right] at (tree2) {$l$};
	\node[above] at (j) {$j$};
\end{tikzpicture}
\quad\begin{tikzpicture}[>=stealth]\draw[->, thick](0,0) -- (1,0);\end{tikzpicture}\quad
\eldiff(\diag) = f^k_i f^i \quad f_{jp}^l f^j \textcolor{emphcolor}{\boldsymbol{f_{mnq}^p}} f^m f^n f_l^q
\end{equation}
Clearly, the orientation of the graph matters.
We will, however, often draw the graph without orientation, the convention being that the loops are always oriented in the counterclockwise direction, the root of a tree is the bottom node, and the direction of the edges goes towards the root.
The aromatic tree in \eqref{eqex:diageldiff} will thus be written in compact form as
\begin{equation}
\begin{tikzpicture}[setree]
	\placeroots{4}
	\children[1]{child{node(i){}}}
	\children[2]{child{node(j){}}}
	\children[3]{child{node(m){}} child{node(n){}}}
	\jointrees{2}{4}
\end{tikzpicture}
\end{equation}

We now define the following generalisation of B-series.

The set of aromatic trees is denoted by $\Dit[]$, and the number of nodes of an aromatic tree $\dit$ is denoted by $\abs{\dit}$.
\begin{definition}
	For a mapping $b \colon \Dit[] \to \RR$, we define an \demph{aromatic B-series} as a formal series of the form
	\begin{equation}
	\sum_{m = 1}^{\infty} \sum_{\abs{\dit}=m} b(\dit) \eldiff(\dit)
	\end{equation}
	where $\eldiff$ is the elementary differential map.
\end{definition}

Our main result is the following.
The proof is postponed to \autoref{sec:mainproof}.
\begin{theorem}
	\label{thm:affineequiseries}
	If a smooth mapping $\meth\colon \diff[\RR^d] \to \diff[\RR^d]$ is local and affine equivariant, then its Taylor development at the zero vector field is an aromatic B-series.
\end{theorem}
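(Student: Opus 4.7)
The plan is to reduce the theorem to a description of symmetric, multilinear, local and affine equivariant maps, and then identify those maps with sums of elementary differentials indexed by aromatic trees, via the combination of Peetre's theorem and the first fundamental theorem of invariant theory for $\GL$.

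First I would Taylor expand $\meth$ at the zero vector field and, appealing to the contagion result announced in \autoref{sec:contagion}, observe that each homogeneous term $\Lin_m \coloneqq \frac{1}{m!}\Der^m\meth(0)$ inherits locality and affine equivariance, and is a symmetric $m$-linear map
\begin{equation}
\Lin_m \colon \sym^m \diff[\Rd] \to \diff[\Rd].
\end{equation}
It therefore suffices to prove that every such $\Lin_m$ is a finite $\RR$-linear combination of elementary differentials $\eldiff(\dit)$ with $\abs{\dit} = m$. Using translation equivariance, it is enough to describe the value of $\Lin_m(f_1, \ldots, f_m)$ at the origin $\origin \in \Rd$: once this is known as a polynomial expression in the jets of the arguments at $\origin$, translation equivariance reconstructs the full vector field.

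Next I would invoke Peetre's theorem (as used in \autoref{sec:extension}) to conclude that, because $\Lin_m$ is local and multilinear, $\Lin_m(f_1, \ldots, f_m)(\origin)$ depends only on a finite-order jet $\jet f_i$ of each $f_i$ at $\origin$. So $\Lin_m$ factors through a symmetric, $\GL$-equivariant multilinear map
\begin{equation}
\sym^m \Jetx{\origin} \longrightarrow \Tan[\origin] \Rd.
\end{equation}
Unfolding the jet space as a direct sum of copies of $\Rd \otimes \sym^k(\Rd)^*$, the target of the description is the space of $\GL$-equivariant linear maps from a tensor product of copies of $\Rd$ and of $(\Rd)^*$ into $\Rd$. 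At this point the first fundamental theorem of invariant theory (cited as the invariant tensor theorem in the introduction, and executed via the decomposition into invariant subspaces of \autoref{sec:invsubspaces}) says that such maps are spanned by the index-contraction maps obtained by pairing each $(\Rd)^*$ slot with some $\Rd$ slot.

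Finally I would translate the resulting contractions into aromatic trees. Each of the $m$ vector-field arguments contributes one node, which carries one contravariant index (its own $\Rd$ factor) and some number of covariant indices (one $(\Rd)^*$ factor per order of differentiation). A contraction pairing a $(\Rd)^*$ slot of node $p$ with the $\Rd$ slot of node $q$ is drawn as an edge from $q$ to $p$; since each node supplies a single contravariant slot, each node has at most one outgoing edge, which is precisely the defining property of an aromatic forest. The single uncontracted contravariant slot produces the output vector and corresponds to the root, so the total number of contractions is $m-1$ and the underlying graph is an aromatic tree $\dit$ with $\abs{\dit}=m$. The resulting tensor expression is exactly $\eldiff(\dit)$, and summing over all such diagrams with the scalar coefficients provided by invariant theory yields the aromatic B-series, as claimed.

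The main obstacle is the middle step: matching the symmetric-tensor bookkeeping coming out of Peetre's theorem with the invariant-theoretic decomposition so that the diagrammatic translation is bijective, in particular so that the ``one outgoing edge per node'' combinatorics of aromatic trees corresponds to the ``one contravariant slot per argument'' multiplicity of $\sym^m \diff[\Rd]$, without over- or undercounting when several arguments carry the same jet order. Handling the symmetry factors cleanly, and fixing once and for all the passage between symmetric multilinear maps and their diagonal polarizations used in the B-series, is where the bulk of the technical work will lie.
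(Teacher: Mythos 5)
Your proposal is correct and follows essentially the same route as the paper: the transfer of locality and equivariance to the Taylor terms, reduction to the origin via translation equivariance and Peetre's theorem, the invariant tensor theorem for $\GL$, and the diagrammatic translation of index contractions into aromatic trees. You have also correctly identified where the real technical work lies, namely reconciling the symmetric-tensor bookkeeping with the contraction diagrams, which is exactly what the paper's \autoref{thm:diagram} resolves via the quotient by the stabilizer group $\symgrp_{\prt}$.
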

This means that if a numerical method is local and affine equivariant on $\RR^d$, then its modified vector field $\meth(f)$ has a development in aromatic B-series as
\begin{equation}
	\label{eq:Taylroequi}
\meth(f) = 
b(\ATb) f 
+ b(\ATbb)f'f 
+ b(\ATbapab)f \Div(f) 
+ \cdots
\end{equation}
where $b$ is an arbitrary function from the space of aromatic trees to $\RR$.

Note that as we assume that $\meth$ is \emph{smooth} (as opposed to analytic), its Taylor series is necessarily a formal series.

We now turn to examine some examples of application of \autoref{thm:affineequiseries}.

The simplest example is the ``linear'' case.
The only aromatic forest with one node and zero arrows is clearly
\begin{equation}
	\begin{tikzpicture}[etree]
		\placeroots{1}
	\end{tikzpicture}
	.
\end{equation}
So a linear, local and equivariant map  must be proportional to the identity:
\begin{equation}
	\meth(f) = \lambda f
	,
\end{equation}
and the only non zero coefficient in \eqref{eq:Taylroequi} is $b(\ATb) = \lambda$.

The bilinear case corresponds to aromatic forests with two nodes and one arrow.
There are two possibilities, namely
\begin{equation}
	\begin{tikzpicture}[etree]
		\placeroots{1}
		\children{child{node{}}}
	\end{tikzpicture}
	\qquad
	\text{and}
	\qquad
	\begin{tikzpicture}[etree]
		\placeroots{2}
		\jointrees{2}{2}
	\end{tikzpicture}
\end{equation}
As a result, bilinear, local equivariant maps must be of the form
\begin{equation}
	\func(f) = \lambda_1 f^i_j f^j + \lambda_2 f^i f^j_j
	,
\end{equation}
and the only nonzero coefficients in \eqref{eq:Taylroequi} are $b(\ATbb) = \lambda_1$ and $b(\ATbapab) = \lambda_2$.


We provide a table with the aromatic trees of order up to four in \autoref{tab:dtrees}.

\newcommand*\midruleheight{1pt}
\newcommand*\midmidrule{\midrule[\midruleheight]}

\newcommand*\dec[1]{\textcolor{gray}{#1}}

\begin{longtable}{l|l|l|l|l}
	\toprule
	$\diag$ & $F(\diag)$ & $\abs{\prt_{\diag}}$ & $\prt_{\diag}$ & $\prt_{\diag}'$ \\
	\midrule
	\endhead
\begin{tikzpicture}[etree]
	\placeroots{1}[.3]
\end{tikzpicture}
&
$f^k = f$
&
1
&
$(1)$
&
$(0)$
\\
\midmidrule

\begin{tikzpicture}[etree]
	\placeroots{1}[.3]
	\children[1]{child{node{}}}
\end{tikzpicture}
&
$f^k_i f^i = f'f$
&
2
&
$(1,1)$
&
$(0,1)$
\\
\begin{tikzpicture}[etree]
	\placeroots{2}[.3]
	\jointrees{2}{2}
\end{tikzpicture}
&
$f^k  f_i^i = f \Div(f)$
&
&
$\dec{(1,0) + (0,1)}$
&
\\
\midmidrule
\begin{tikzpicture}[etree]
	\placeroots{1}[.3]
	\children[1]{child{node{}} child{node{}}}
\end{tikzpicture}
&
$ f^k_{ij}f^i f^j= f''(f,f) $
&
3
&
$(2,0,1)$
&
$(0,0,2)$
\\
\begin{tikzpicture}[etree]
	\placeroots{2}
	\children[2]{child{node{}}}
	\jointrees{2}{2}
\end{tikzpicture}
&
$f^k  f^i_{ij}f^j = f \pairing{\dd \Div(f)}{f}$
&&
$\dec{(1,0,0) + (1,0,1)}$
&
\\
\midrule
\begin{tikzpicture}[etree]
	\placeroots{1}[.3]
	\children[1]{child{node{} child{node{}}}}
\end{tikzpicture}
&
$f^k_i f^i_j f^j = f'f'f$
&3&$(1,2)$& $(0,2)$
\\
\begin{tikzpicture}[etree]
	\placeroots{2}[.3]
	\children[1]{child{node{}}}
	\joinlast{2}{2}
\end{tikzpicture}
&
$f^k_i f^i f_j^j = f' f \Div(f)$
&&
$\dec{(1,1)+(0,1)}$
&
\\
\begin{tikzpicture}[etree]
	\placeroots{3}[.3]
	\jointrees{2}{3}
\end{tikzpicture}
&
$f^k f_{i}^jf_{j}^i  $
&&
$\dec{(1)+(0,2)}$
&
\\
\begin{tikzpicture}[etree]
	\placeroots{3}[.3]
	\joinlast{2}{2}
	\joinlast{3}{3}
\end{tikzpicture}
&
$f^k f_i^i f_j^j = f \paren[\big]{\Div(f)}^2$
&&
$\dec{(1)+(0,1)+(0,1)}$
&
\\
\midmidrule
\begin{tikzpicture}[setree]
	\placeroots{1}[.3]
	\children[1]{child{node {} child{node{} child{node{}}}}}
\end{tikzpicture}
&
&
4
&
$(1,3)$
&
$(0,3)$
\\
\begin{tikzpicture}[setree]
	\placeroots{2}[.3]
\children[1]{child{node {} child{node{}}}}
\jointrees{2}{2}
\end{tikzpicture}
&
&&
$\dec{(1,2)+(0,1)}$
&
\\
\begin{tikzpicture}[setree]
	\placeroots{3}
	\children[1]{child{node{}}}
	\jointrees{2}{3}
\end{tikzpicture}
&
&&
$\dec{(1,1)+(0,2)}$
&
\\
\begin{tikzpicture}[setree]
	\placeroots{3}[.35]
	\children[1]{child{node{}}}
	\jointrees{2}{2}
	\jointrees{3}{3}
\end{tikzpicture}
&
&&
$\dec{(1,1)+(0,1)+(0,1)}$
&
\\
\begin{tikzpicture}[setree]
	\placeroots{4}
	\jointrees{2}{4}
\end{tikzpicture}
&
&&
$\dec{(1)+(0,3)}$
&
\\
\begin{tikzpicture}[setree]
	\placeroots{4}
	\jointrees{2}{3}
	\jointrees{4}{4}
\end{tikzpicture}
&
&&
$\dec{(1)+(0,2)+(0,1)}$
&
\\
\begin{tikzpicture}[setree]
	\placeroots{4}
	\jointrees{2}{2}
	\jointrees{3}{3}
	\jointrees{4}{4}
\end{tikzpicture}
&
&&
$\dec{(1)+(0,1)+(0,1)+(0,1)}$
&
\\
\midrule
\begin{tikzpicture}[setree]
	\placeroots{1}
	\children[1]{child{node{} child{node{}} child{node{}}}}
\end{tikzpicture}
&
&
4
&
(2,1,1)
&
(0,1,2)
\\
\begin{tikzpicture}[setree]
	\placeroots{1}
	\children[1]{child{node {} child{node{}}} child{node{}}}
\end{tikzpicture}
&
&&
$\dec{(2,1,1)}$
&
\\
\begin{tikzpicture}[setree]
	\placeroots{2}
	\children[1]{child{node{}} child{node{}}}
	\jointrees{2}{2}
\end{tikzpicture}
&
&
&
$\dec{(2,0,1)+(0,1)}$
&
\\
\begin{tikzpicture}[setree]
	\placeroots{2}
	\children[1]{child{node{}}}
	\children[2]{child{node{}}}
	\jointrees{2}{2}
\end{tikzpicture}
&
&&
$\dec{(1,1)+(1,0,1)}$
&
\\
\begin{tikzpicture}[setree]
	\placeroots{2}
	\children[2]{child{node{} child{node{}}}}
	\jointrees{2}{2}
\end{tikzpicture}
&
&&
$\dec{(1)+(1,1,1)}$
&
\\
\begin{tikzpicture}[setree]
	\placeroots{3}
	\children[2]{child{node{}}}
	\jointrees{2}{3}
\end{tikzpicture}
&
&&
$\dec{(1)+(1,1,1)}$
&
\\
\begin{tikzpicture}[setree]
	\placeroots{3}
	\children[2]{child{node{}}}
	\jointrees{2}{2}
	\jointrees{3}{3}
\end{tikzpicture}
&
&&
$\dec{(1)+(1,0,1)+(0,1)}$
&
\\
\midrule
\begin{tikzpicture}[setree]
	\placeroots{1}
	\children[1]{child{node{}} child{node{}} child{node{}}}
\end{tikzpicture}
&
&
4
&
(3,0,0,1)
&
(0,0,0,3)
\\
\begin{tikzpicture}[setree]
	\placeroots{2}
	\children[2]{child{node{}} child{node{}}}
	\jointrees{2}{2}
\end{tikzpicture}
&
&&
$\dec{(1)+(2,0,0,1)}$
&
\\
\bottomrule
	\caption[asdf]{
		The aromatic trees of up to order four, i.e., aromatic forests with $n$ nodes and $n-1$ arrows, with $n \leq 4$.
		The composition $\prt_{\diag}$ of an aromatic tree $\diag$ is defined in \eqref{eq:defdiagcomposition}.
In short, the elements of the composition $\prt(j)$ counts the
    number of nodes with $j$ parents and $\abs{\prt} \coloneqq \sum_j \prt(j)$ is then the total number of nodes. 
    Note that non-isomorphic graphs may have the same composition, 
    e.g. there are two trees and seven aromatic trees with composition $\prt=(2,1,1)$.
	Similarly, the derived composition $\prt'$ defined by $\prt'(j) = j \kappa(j)$ is used to count the number of edges.
    Indeed, $\prt'(j)$ is the number of edges having target nodes with $j$ parents and $\abs{\prt'}$ counts the total number of edges.
	For instance, if $\prt = (2,1,1)$, then $\prt' = (0,1,2)$ and we count $\abs{\prt} = 4$, $\abs{\prt'} = 3$ which gives four nodes and three edges.
	}
	\label{tab:dtrees}
\end{longtable}

\begin{equation}
	\,
\end{equation}

We finish by a remark on the assumptions of \autoref{thm:affineequiseries}.
It is clear that equivariance plays a key role in the form of the terms in aromatic B-series.
What about the locality assumption?
One can find equivariant, non local maps which have no aromatic B-series using fractional derivatives.
For instance, in dimension one, one can define the half derivative $f^{(1/2)}$ of a smooth function from $\RR$ to $\RR$ using the square root as a Fourier multiplier, as in \cite{FoVe09}.
The mapping $f \mapsto \paren[\big]{f^{(1/2)}}^2$ is then equivariant, but it has no aromatic B-series, because it is not local.

\subsection{Pedestrian guide to the proof}

The rest of the article is essentially a proof of the claims made in \autoref{sec:mainresults}.
The bulk of the intermediary results do not pertain to numerical analysis, but rather to differential geometry and group representation theory.
The reader accustomed to numerical analysis papers may thus find the rest of the paper difficult to read.
We therefore present a simplified guide to the main ingredients of the proof \autoref{thm:affineequiseries}, our main result.
We have to forewarn the reader that this section is approximative and hides a number of technical difficulties which are properly addressed in the rest of the paper.

The proof essentially decomposes in four steps.

\subsubsection*{First step: transfer argument}

The statement in \autoref{thm:affineequiseries} is about the Taylor development of a function having certain properties: locality and affine equivariance.
The idea of the transfer argument is that those properties happen to \emph{transfer} to every component of the Taylor expansion.
Consider a Taylor development as a sequence of multilinear maps which approximate the nonlinear map $\func$.
The main objective of \autoref{sec:contagion} is \autoref{prop:Taylorequilocal}, which shows that if $\func$ is local and affine equivariant, then so are all its multilinear approximations at zero.
This reduces the proof of \autoref{thm:affineequiseries} to proving statements about multilinear maps instead of nonlinear maps.

To fix the idea, we assume henceforth that $\varphi$ is \emph{bilinear}.
The remaining of this section is now to prove that $\func(f)$ must be of the form
\begin{align}
	\varphi(f)(x) = \lambda f'(x) f(x) + \mu f(x) \Div (f)(x)
\end{align}
for some scalars $\lambda$ and $\mu$.

Note the strength of our claim here.
The space of bilinear maps is clearly infinite.
We claim that if the locality and affine equivariance are enforced, then that space reduces to a \emph{finite dimensional} space, which has in fact exactly dimension two (hence the two scalar coefficients $\lambda$ and $\mu$).
In general, the maximal dimension of that space is counted by the aromatic trees with a given number of nodes.

\subsubsection*{Second step: using locality}

The first property we are going to use is \emph{locality} of $\func$.
For technical reasons, this is not exactly how we do it in the paper, but it boils down to the same.
The main ingredient is the multilinear version of Peetre's theorem \cite[\S\,19.9]{KoMiSl93}, which says that for \emph{multilinear}, local function $\func$, it must factor through the Taylor development of $f$.
If we fix a basis of $\RR^d$, the Taylor development of $f$ is the sequence of all the derivatives of all it's components:
\begin{align}
(f^1,\ldots,f^i,\ldots,f_1^1,\ldots,f_i^j,\ldots,f^1_{11},\ldots,f^{i}_{jk},\ldots)
\end{align}
So Peetre's theorem tells us that $\func$, being local, must be of the form
\begin{align}
	\func(f)(x) = \lfunc[x](f^i(x),f^i_{j}(x),f^i_{jk}(x),\ldots)
\end{align}
Peetre's theorem moreover says that on any neighbourhood of $x$, the function $\lfunc[x]$ depends on a finite number of derivatives of $f$.

\subsubsection*{Third step: extension principle}

We now observe that since $\func$ is equivariant with respect to a \emph{transitive} action, the functions $\lfunc[x]$ for any point $x\in\RR^d$ are in fact determined by one of them at a given, arbitrary point.
We name this point $\origin$, and the proof henceforth reduces to the study of $\lfunc[\origin]$.
Moreover, the extension principles also states that the function $\lfunc[\origin]$ has to be $\GL[d]$ invariant.
The precise statement of this and the previous step is \autoref{prop:extension}.

\subsubsection*{Fourth step: invariant tensor theorem}

Finally, as we observe in \autoref{prop:prolongaction}, the action of $\GL[d]$ is precisely the tensors product action.
This means that we have reduced the original problem to a \emph{purely algebraic problem}.
In our case, it boils down to the following question: which are the $\GL[d]$-equivariant bilinear forms depending of a vector, a matrix, and higher order tensors?

The first step, which is a part of the invariant tensor theorem, is to realize that, by a scaling argument, $\lfunc[]$ can only depend on $f^i$ and $f^i_j$.
To make the presentation more transparent, we call $V$ the vector of components $f^i$, and $M$ the matrix of components $f^i_j$.
Note that the action of $\GL[d]$ on the Taylor expansion of $f$, i.e., the prolonged action of $\GL$ is the tensor product action.
In this case, the action of an element $A\in\GL$ on $M$ and $V$ is thus simply:
\begin{align}
	\label{eq:tensactex}
A \cdot V = A\, V \qquad A \cdot M = A \, M \, A\inv
\end{align}
The question is now: which bilinear maps of the form $\lfunc(V,M)$ are $\GL[d]$ equivariant?
Note that equivariance of $\lfunc$ reduces to the equation:
\begin{align}
	\lfunc(A\cdot V, A \cdot M) = A \cdot \lfunc[](V,M) \qquad A \in \GL
	,
\end{align}
which in view of \eqref{eq:tensactex} reduces to
\begin{align}
	\label{eq:matvecequi}
	\lfunc(A\, V, A \, M \, A\inv) = A\,  \lfunc(V,M) \qquad A \in \GL
	.
\end{align}

The invariant tensor theorem provides us with the answer.
Notice first that the two equivariant combinations, matrix-vector product $M \, V$ and  $V \operatorname{Tr}(M)$, are equivariant.
Indeed, the reader should verify that both expression fulfill the constraint \eqref{eq:matvecequi}.
But are these the only equivariant expressions?
In our case, this reduces to look for equivariant linear maps from $\RR^d \otimes \RR^d$ to itself.
The invariant tensor theorem tells us that such equivariant maps are obtained by permutations of the indices.
As we shall see in \autoref{thm:diagram}, we can in a natural way associate an aromatic tree to any such permutation.

We thus obtain that $\lfunc[](V,M) = \lambda \, M \, V + \mu V  \operatorname{Tr}(M)$.
Translated back in components of a vector field $f$, we obtain the expected expression
\begin{align}
	\lfunc[](f) = \lambda f' f + \mu f \Div (f)
\end{align}
which is what we wanted to prove.

The rest of this paper is now a generalization and proper formulation of those ideas.

\section{Transfer Argument}
\label{sec:contagion}

The aim of this section is to restrict the study of local equivariant maps to symmetric, $m$-linear, local equivariant maps.

\subsection{Definitions}

\subsubsection{Equivariance}

Consider two Fréchet vector spaces $\Ebv$ and $\Fbv$.
In the sequel, these spaces will be the spaces of vector fields on a manifold $\Man$, later to be specialized further to the affine case $\Man = \RR^d$.

We assume that a Lie group $\symgrp$ acts on $\Ebv$ and $\Fbv$ linearly.

\begin{definition}
\label{def:equivariance}
A mapping
\begin{equation}
	\func \in \Cinf(\Ebv,\Fbv)
\end{equation}
is \demph{equivariant} if
\begin{equation}
	\func(g \act \sigma) = g \act \func(\sigma)
	,
	\qquad
	\sigma \in \Ebv
	,
	\quad g \in \symgrp
	.
\end{equation}

We denote the \demph{space of smooth $\symgrp$-equivariant functions} by $\Cinf_{\symgrp}\paren{\Ebv,\Fbv}$:
\begin{equation}
	\Cinf_{\symgrp}\paren{\Ebv, \Fbv}
	\coloneqq
	\setc[\big]{\func \in \Cinf\paren{\Ebv,\Fbv}}{\func(g\act \sigma) = g \act \func(\sigma) \quad \forall \sigma\in\Ebv,\,g\in\symgrp}
	.
\end{equation}
\end{definition}

\subsubsection{Derivative}

Recall that the derivative operator $\Der$ is defined as \cite[\S\,I.3.18]{KrMi97}
\begin{equation}
	\Der \colon \Cinf(\Ebv,\Fbv) \to \Cinf(\Ebv, \Lin(\Ebv,\Fbv))
\end{equation}
which, for an element $\func \in \Cinf(\Ebv,\Fbv)$, is defined by
\begin{equation}
	\pairing{\Der \func(\sigma)}{\tau} \coloneqq \lim_{\varepsilon \to 0}{\frac{\func(\sigma + \varepsilon \tau) - \func(\sigma)}{\varepsilon}}, \qquad \sigma,\,\tau \in \Ebv
	.
\end{equation}

If we repeatedly make the identification \cite[\S\,I.5.2]{KrMi97}
\begin{equation}
	\Lin(\Ebv, \Lin(\Ebv,\Fbv)) \simeq \Lin(\Ebv\otimes \Ebv, \Fbv)
	,
\end{equation}
then we can consider $\Der^m\func$ as a mapping from $\Ebv$ to $\Lin(\otimes^m \Ebv,\Fbv)$.
Moreover, the image of $\Der^m\func$ is symmetric, so it factorizes to an element in $\Lin(\sym^m \Ebv,\Fbv)$ \cite[\S\,I.5.11]{KrMi97}.

\begin{definition}
For any integer $m$, we define the \demph{$m$-th Taylor term} map
\begin{equation}
T^m\colon
\Cinf\paren{\Ebv, \Fbv}
\to
\Lin\paren{\sym^m \Ebv, \Fbv}
\end{equation}
defined by
\begin{equation}
T^m(\func) \coloneqq \Der^m\func(0)
.
\end{equation}
\end{definition}

\subsubsection{Locality}

Suppose that we have two vector bundles $\bndlproj{\vecbndl}$ and $\bndlproj{\vecbndl[F]}$, that is, two vector bundles over the same base manifold $\Man$.
In the sequel, these vector bundles will be the tangent bundle of some manifold $\Man$, later to be specialized in the affine case $\Man = \RR^d$.

\begin{definition}
\label{def:support}
For a section $\sigma \in \sects{\vecbndl}$, we define its \demph{support} as the closure of the set of points where $\sigma$ is non-zero:
\begin{equation}
\supp(\sigma) \coloneqq \overline{\setc[\big]{x \in \Man}{\sigma(x) \neq 0}}
	.
\end{equation}
\end{definition}

This allows to define locality as follows.
\begin{definition}
\label{def:locality}
A mapping $\func \in \Cinf\big(\sects{\vecbndl},\sects{\vecbndl[F]}\big)$ is \demph{local} if it is \emph{support non increasing} \cite[\S\,19.1]{KoMiSl93}:
\begin{equation}
	\supp\paren[\big]{\func(\sigma)} \subset \supp(\sigma)
	.
\end{equation}
We denote the \demph{space of local functions} by $\Cinfloc$:
\begin{multline}
	\Cinfloc\paren[\big]{\sects{\vecbndl}, \sects{\vecbndl[F]}}
	\coloneqq \\
	\setc[\big]{\func \in \Cinf\paren[\big]{\sects{\vecbndl}, \sects{\vecbndl[F]}}}{\supp\paren[\big]{\func(\sigma)} \subset \supp(\sigma)}
	.
\end{multline}
\end{definition}
A function $\func$ is local if it only depends on the \emph{germs} of the sections of $\vecbndl$.



\subsection{Transfer of Equivariance}

We show that the property of equivariance is preserved under derivation, and in particular, when passing to the Taylor terms.

Since $\symgrp$ acts on $\Ebv$ and $\Fbv$ linearly, it also acts linearly on $\Lin(\Ebv,\Fbv)$ by
\begin{equation}
	g \act A \coloneqq g A g\inv
	,
\end{equation}
that is
\begin{equation}
	(g \act A)(x) \coloneqq g \act \big(A(g \inv \act x)\big)
	.
\end{equation}

\begin{lemma}
	\label{lma:equivder}
	If $\func \in \Cinf(\Ebv,\Fbv)$ is equivariant, then so is $\Der \func \in\Cinf\paren[\big]{\Ebv,\Lin\paren{\Ebv,\Fbv}}$.	
\end{lemma}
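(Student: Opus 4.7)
The plan is to unfold the definition of equivariance of $\Der\func$ and reduce it, via the definition of the derivative, to equivariance of $\func$ combined with the linearity of the $\symgrp$-action.

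First, I would translate the target statement into concrete terms. Equivariance of $\Der\func$ means $\Der\func(g\act\sigma) = g \act \Der\func(\sigma)$ for all $\sigma \in \Ebv$ and $g \in \symgrp$. Pairing both sides against a tangent vector and applying the definition of the $\symgrp$-action on $\Lin(\Ebv,\Fbv)$ gives the equivalent pointwise condition
\begin{equation}
	\pairing{\Der\func(g\act\sigma)}{g\act\tau} = g\act\pairing{\Der\func(\sigma)}{\tau},
	\qquad \sigma,\tau\in\Ebv,\; g\in\symgrp.
\end{equation}
This is what I will verify.

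Next, I would compute the left-hand side directly from the definition of $\Der$. Since the action of $g$ on $\Ebv$ is linear, we have $g\act(\sigma+\varepsilon\tau)=g\act\sigma+\varepsilon(g\act\tau)$, so
\begin{equation}
\pairing{\Der\func(g\act\sigma)}{g\act\tau}
= \lim_{\varepsilon\to 0}\frac{\func(g\act\sigma+\varepsilon(g\act\tau))-\func(g\act\sigma)}{\varepsilon}
= \lim_{\varepsilon\to 0}\frac{\func(g\act(\sigma+\varepsilon\tau))-\func(g\act\sigma)}{\varepsilon}.
\end{equation}
Applying equivariance of $\func$ termwise turns the numerator into $g\act\func(\sigma+\varepsilon\tau)-g\act\func(\sigma)$, which by linearity of the $g$-action equals $g\act\bigl(\func(\sigma+\varepsilon\tau)-\func(\sigma)\bigr)$.

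Finally, because the linear action of $g$ is continuous, it commutes with the limit in $\varepsilon$, and I can pull it outside to obtain $g\act\pairing{\Der\func(\sigma)}{\tau}$, which is exactly the right-hand side. I do not anticipate any real obstacle: the entire content is bookkeeping between the two flavours of the action (on $\Ebv$ and on $\Lin(\Ebv,\Fbv)$), and the only analytic ingredient used is continuity of a linear group action, which is automatic in this Fréchet setting. The only place one has to be mildly careful is to remember to pair against $g\act\tau$ rather than $\tau$ when testing the equivariance identity, since the action on $\Lin(\Ebv,\Fbv)$ is conjugation.
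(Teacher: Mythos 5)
Your proof is correct and follows essentially the same route as the paper's: unfold the limit defining $\Der\func$, use linearity of the $g$-action to rewrite the increment, apply equivariance of $\func$, and pull the (continuous, linear) action of $g$ through the limit. The only cosmetic difference is that you test the identity against $g\act\tau$ while the paper tests against $\tau$ and absorbs the $g\inv$ into the conjugation action on $\Lin(\Ebv,\Fbv)$; these are equivalent.
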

\begin{proof}
	\begin{equation}
	\pairing{\Der \func(g \act \sigma)}{\tau} = \lim_{\varepsilon\to 0}{\frac{\func(g \act \sigma + \varepsilon \tau) - \func(g\act \sigma)}{\varepsilon}}
	\end{equation}
	By linearity of the action we have
\begin{equation}
	{\func(g \act \sigma + \varepsilon \tau) - \func(g\act \sigma)} = g \act (\func(\sigma + \varepsilon(g\inv \act \tau)) - \func(\sigma))
	,
\end{equation}
so we obtain
\begin{equation}
	\pairing{\Der \func(g \act \sigma)}{\tau} = \pairing{g \act \Der \func}{\tau}
	.
\end{equation}
\end{proof}

\begin{lemma}
	\label{lma:zeroinv}
	If $\func \in \Cinf(\Ebv,\Fbv)$ is equivariant, then $\func(0)$ is invariant.
\end{lemma}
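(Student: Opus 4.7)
The proof is immediate from two observations, which I would combine in a single line. First, since the group action of $\symgrp$ on $\Ebv$ is assumed to be \emph{linear}, we have $g \act 0 = 0$ for every $g \in \symgrp$. Second, the equivariance relation $\func(g \act \sigma) = g \act \func(\sigma)$, specialized at $\sigma = 0$, yields
\begin{equation}
\func(0) = \func(g \act 0) = g \act \func(0),
\qquad g \in \symgrp,
\end{equation}
which is precisely the statement that $\func(0) \in \Fbv$ is invariant under the $\symgrp$-action.

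There is no real obstacle here; the lemma is a formal consequence of the linearity of the action together with the defining equivariance identity evaluated at the fixed point $0$ of that action. The only point worth flagging explicitly before writing the display is that linearity of the action on $\Ebv$ gives $g \act 0 = 0$, which is what pins $\func(0)$ (rather than some other value) as the fixed vector. No derivatives, no locality assumption, and no structure on $\Fbv$ beyond the linear action are used; in particular this is the ``base case'' analogue of \autoref{lma:equivder}, and together the two lemmas set up the iterated application to the Taylor terms $T^m(\func) = \Der^m \func(0)$ that will be invoked in the next subsection.
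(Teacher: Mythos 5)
Your proof is correct and follows essentially the same route as the paper's: use linearity of the action to get $g \act 0 = 0$, then evaluate the equivariance identity at $\sigma = 0$ to conclude $\func(0) = g \act \func(0)$. Nothing is missing.
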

\begin{proof}
	By linearity of the action of $\symgrp$ on $\Ebv$, we have $g\act 0 = 0$, so
	\begin{equation}
		(g \act \func) (0) = g \act \func(g\inv\act 0) = g \act \func (0)
		.
	\end{equation}
	By equivariance of $\func$, we obtain
	\begin{equation}
		\func(0) = g \act \func(0)
		,
		\qquad g \in \symgrp
		.
	\end{equation}
\end{proof}

\begin{proposition}
\label{prop:Taylorequi}
	For each integer $m$, the mapping $T^m$ induces a projection from $\Cinf_{\symgrp}\paren{\Ebv,\Fbv}$ onto $\Lin_{\symgrp}\paren{\sym^m\Ebv,\Fbv}$.
\end{proposition}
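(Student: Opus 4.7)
The plan is to prove the proposition in two parts: (i) that $T^m$ actually maps $\Cinf_{\symgrp}(\Ebv,\Fbv)$ into $\Lin_{\symgrp}(\sym^m \Ebv,\Fbv)$, and (ii) that it admits an explicit linear right inverse, so it is in particular surjective. The two preparatory lemmas \ref{lma:equivder} and \ref{lma:zeroinv} have been set up precisely so that (i) falls out by iteration.

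For well-definedness, I would iterate Lemma \ref{lma:equivder}: each application promotes equivariance of a smooth map $\Ebv \to \mathcal{V}$ to equivariance of its derivative $\Ebv \to \Lin(\Ebv,\mathcal{V})$ under the conjugation action $g \act A = g A g\inv$. After $m$ steps, $\Der^m \func$ is an equivariant smooth map $\Ebv \to \Lin(\otimes^m \Ebv,\Fbv)$. Lemma \ref{lma:zeroinv} then yields that $T^m(\func) = \Der^m\func(0)$ is $\symgrp$-invariant in $\Lin(\otimes^m \Ebv,\Fbv)$. Unwinding the definition of the conjugation action, invariance of $A = T^m(\func)$ is literally the statement that $A(g\act\tau_1,\ldots,g\act\tau_m) = g \act A(\tau_1,\ldots,\tau_m)$, i.e., $\symgrp$-equivariance of the underlying multilinear map. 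Since $T^m(\func)$ is symmetric by the standard symmetry of higher derivatives noted in \cite[\S\,I.5.11]{KrMi97}, it descends to $\Lin_{\symgrp}(\sym^m \Ebv,\Fbv)$ as claimed.

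For surjectivity, I would exhibit a section. Given $A \in \Lin_{\symgrp}(\sym^m \Ebv,\Fbv)$, set
\begin{equation}
\func_A(\sigma) \coloneqq \tfrac{1}{m!}\, A(\sigma,\ldots,\sigma).
\end{equation}
This is a continuous homogeneous polynomial of degree $m$, hence smooth in the convenient-calculus sense used throughout the paper, and equivariant because the $\symgrp$-action on $\Ebv$ is linear and $A$ is $\symgrp$-equivariant:
\begin{equation}
\func_A(g\act\sigma) = \tfrac{1}{m!}\, A(g\act\sigma,\ldots,g\act\sigma) = g\act \tfrac{1}{m!}\, A(\sigma,\ldots,\sigma) = g\act \func_A(\sigma).
\end{equation}
A direct computation (polarisation of a symmetric $m$-form) gives $T^m(\func_A) = \Der^m\func_A(0) = A$, so $A \mapsto \func_A$ is a linear right inverse to $T^m$, proving that $T^m$ is a surjection onto $\Lin_{\symgrp}(\sym^m\Ebv,\Fbv)$, as the statement asserts.

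The only real subtlety — which I expect to be the main obstacle if one is being pedantic — is the bookkeeping identification between \emph{invariance} of an element $A \in \Lin(\otimes^m \Ebv,\Fbv)$ under the iterated conjugation action and \emph{equivariance} of the corresponding multilinear map; this is what allows the chain ``iterated Lemma \ref{lma:equivder} + Lemma \ref{lma:zeroinv}'' to deliver an output in the correct space. Verifying smoothness of the polynomial $\func_A$ between Fréchet spaces is similarly a routine check rather than a substantive difficulty.
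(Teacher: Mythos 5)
Your proof is correct and follows essentially the same route as the paper: iterate \autoref{lma:equivder} to get equivariance of $\Der^m\func$, apply \autoref{lma:zeroinv} at zero, and identify invariance under the conjugation action with equivariance of the symmetric multilinear map. Your explicit section $A \mapsto \tfrac{1}{m!}A(\sigma,\ldots,\sigma)$ makes the ``onto'' part precise, which the paper leaves implicit (it only notes afterwards that $\Lin_{\symgrp}(\sym^m\Ebv,\Fbv)$ sits inside the domain of $T^m$), so this is a welcome but not divergent addition.
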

\begin{proof}
	Using \autoref{lma:equivder}, we know that $\Der^m\func$ is equivariant as a mapping from $\Ebv$ to $\Lin(\sym^m \Ebv, \Fbv)$.
	Using \autoref{lma:zeroinv}, we obtain that $\Der^m\func(0)$ is invariant, which is equivalent to being equivariant from $\sym^m \Ebv$ to $\Fbv$.
\end{proof}

\subsection{Transfer of Locality}

We show that the property of locality is preserved when passing to the Taylor terms.

The Taylor terms at zero $T^m(\func) = \Der^m\func(0)$ can be considered as mappings from sections of the Whitney sum of the bundle $\vecbndl$, i.e., we have
\begin{equation}
	\sym^m \sects{\vecbndl} \subset \sects{\oplus^m \vecbndl}
	.
\end{equation}
We thus regard the mapping $T^m$ as
\begin{equation}
	T^m(\func) \in \Cinf\paren[\big]{\sects{\oplus^m \vecbndl} \to \sects{\vecbndl[F]}}
	.
\end{equation}
It thus makes sense to consider the space of local $m$-linear functions as
\begin{equation}
	\Linloc\paren[\big]{\sym^m \sects{\vecbndl}, \sects{\vecbndl[F]}}
	.
\end{equation}

\begin{proposition}
\label{prop:Taylorlocal}
	For each integer $m$, the mapping $T^m$ induces a projection from
	$\Cinfloc\paren{\sects{\vecbndl},\sects{\vecbndl[F]}}$
	onto
		$\Linloc\paren{\sym^m\sects{\vecbndl},\sects{\vecbndl[F]}}$.
\end{proposition}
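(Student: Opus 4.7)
The plan is to verify the two assertions packaged into the statement: first, that $T^m$ maps $\Cinfloc$ into $\Linloc$, and second, that the resulting map is surjective, with a natural section (which is what justifies the ``projection'' terminology, exactly as in \autoref{prop:Taylorequi}).

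For well-definedness, I would start from the polarization identity
\begin{equation}
	T^m(\func)(\sigma_1, \ldots, \sigma_m) = \frac{1}{m!}\frac{\partial^m}{\partial t_1 \cdots \partial t_m}\bigg|_{t = 0} \func\paren[\big]{t_1 \sigma_1 + \cdots + t_m \sigma_m}.
\end{equation}
Assume all $\sigma_i$ vanish on a neighborhood $U$ of some $x_0 \in \Man$. Then for every $(t_1, \ldots, t_m) \in \RR^m$, the combination $\sum_i t_i \sigma_i$ vanishes on $U$, so by locality of $\func$ we have $\supp\paren[\big]{\func(\sum_i t_i \sigma_i)} \subset \supp(\sum_i t_i \sigma_i) \subset \Man \setminus U$. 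Hence the $\sects{\vecbndl[F]}$-valued function $(t_1, \ldots, t_m) \mapsto \func(\sum_i t_i \sigma_i)(x_0)$ is identically zero, and differentiating at the origin gives $T^m(\func)(\sigma_1, \ldots, \sigma_m)(x_0) = 0$. This is precisely the locality condition for multilinear maps when supports on the Whitney sum $\oplus^m \vecbndl$ are identified with unions of component supports.

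For surjectivity, the section sends a local symmetric $m$-linear $L$ to the polynomial map $\func_L(\sigma) \coloneqq \frac{1}{m!} L(\sigma, \ldots, \sigma)$. Smoothness is immediate, and $\Der^m \func_L(0) = L$ follows from symmetry of $L$ together with the fact that $\Der^m \func_L(0)$ recovers the degree-$m$ homogeneous component of the polynomial $\func_L$. The locality of $\func_L$ is transparent, since $\supp\paren[\big]{L(\sigma, \ldots, \sigma)} \subset \supp(\sigma, \ldots, \sigma) = \supp(\sigma)$ by locality of $L$ on the symmetric diagonal.

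Neither step is deep. The point requiring care is just the compatibility between the locality definition applied to multilinear maps on $\sects{\oplus^m \vecbndl}$ and the symmetric diagonal embedding used by the polynomial section; once that convention is fixed, the proof is a faithful analogue of \autoref{prop:Taylorequi}, with locality playing the role of equivariance.
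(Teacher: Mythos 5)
Your well-definedness argument is exactly the paper's proof: fix sections all vanishing on a neighbourhood of $x_0$, observe that every combination $\sum_i t_i\sigma_i$ vanishes there too, invoke locality of $\func$ to kill $\func(\sum_i t_i\sigma_i)$ near $x_0$ for all $t$, and differentiate; the surjectivity via the section $L\mapsto \frac{1}{m!}L(\sigma,\ldots,\sigma)$ is left implicit in the paper but is correct and justifies the word ``projection''. One small normalization slip: with the paper's convention $T^m(\func)=\Der^m\func(0)$, the mixed partial $\partial_{t_1}\cdots\partial_{t_m}\big|_{t=0}\func(t_1\sigma_1+\cdots+t_m\sigma_m)$ already equals $T^m(\func)(\sigma_1,\ldots,\sigma_m)$ with no $\frac{1}{m!}$ prefactor, and it is precisely this convention that makes $\Der^m\func_L(0)=L$ hold for your section, so the $\frac{1}{m!}$ in your polarization identity should be dropped (it is harmless for the locality argument either way).
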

\begin{proof}
	For a fixed $T \coloneqq (\tau_1,\ldots,\tau_m) \in \sects{\oplus^m \vecbndl}$, we define
	\newcommand*\tphi{\widetilde{\func}}
	\begin{equation}
		\tphi_T (t_1,\ldots,t_m) \coloneqq \func(t_1\tau_1+\cdots+t_m\tau_m)
		,
		\qquad
		t_1,\ldots,t_m \in \RR
		.
	\end{equation}
	Now,  we have
	\begin{equation}
		\pairing[\big]{\Der^{m}\func(0)}{T} = \partial_{t_1}\cdots\partial_{t_n}\tphi_T(0,\ldots,0)
		.
	\end{equation}
	Suppose that $x \not\in\supp(T)$. 
	Then, in a neighbourhood of $x$, all the sections $\tau_i$ are zero.
	By locality of $\func$,  the section $\tphi_T(t_1,\ldots,t_m)$ is zero in a neighbourhood of $x$ for all values of $t_1,\ldots,t_m$.
	As a result, the section $\pairing[\big]{\Der^m\func(0)}{T}$ is zero in a neighbourhood of $x$, so $x$ is not in the support of $\pairing[\big]{\Der^m\func(0)}{T}$.
	We have thus shown that $\Der^m\func(0)$ is local.
\end{proof}

\subsection{Transfer Result}

\begin{theorem}
\label{prop:Taylorequilocal}
Suppose that a Lie group $\symgrp$ acts linearly on the space of sections of vector bundles $\bndlproj{E}$ and $\bndlproj{F}$.
	The map $T^m$ induces a projection from
		$\Cinfloc_{\symgrp}\paren{\sects{\vecbndl}, \sects{\vecbndl[F]}}$
		onto
		$\Linloc_{\symgrp}(\sym^m \sects{\vecbndl}, \sects{\vecbndl[F]})$.
\end{theorem}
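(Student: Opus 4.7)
The theorem is essentially the combination of Propositions \ref{prop:Taylorequi} and \ref{prop:Taylorlocal}, supplemented with a surjectivity argument via a polynomial extension. The plan proceeds in two steps.

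First, I would verify the ``maps into'' direction. In the present setting, $\sects{\vecbndl}$ and $\sects{\vecbndl[F]}$ are Fréchet spaces on which $\symgrp$ acts linearly, so Proposition \ref{prop:Taylorequi} applies with $\Ebv = \sects{\vecbndl}$ and $\Fbv = \sects{\vecbndl[F]}$. Given any $\func \in \Cinfloc_{\symgrp}\paren{\sects{\vecbndl}, \sects{\vecbndl[F]}}$, its equivariance yields $T^m(\func) \in \Lin_{\symgrp}\paren{\sym^m \sects{\vecbndl}, \sects{\vecbndl[F]}}$, while its locality combined with Proposition \ref{prop:Taylorlocal} yields $T^m(\func) \in \Linloc\paren{\sym^m \sects{\vecbndl}, \sects{\vecbndl[F]}}$. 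These are independent conditions imposed on the single map $T^m(\func)$, which therefore lies in their intersection $\Linloc_{\symgrp}\paren{\sym^m \sects{\vecbndl}, \sects{\vecbndl[F]}}$.

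Second, to establish that $T^m$ is a projection \emph{onto} this intersection, I would construct an explicit section. Given $A \in \Linloc_{\symgrp}\paren{\sym^m \sects{\vecbndl}, \sects{\vecbndl[F]}}$, I would define the homogeneous polynomial map $\func_A(\sigma) \coloneqq \frac{1}{m!} A(\sigma, \ldots, \sigma)$. Three verifications complete the argument: (i) $T^m(\func_A) = A$ by direct differentiation of a symmetric $m$-linear form evaluated on a diagonal; (ii) equivariance transfers from $A$ to $\func_A$ since $\func_A(g \act \sigma) = \frac{1}{m!} A(g \act \sigma, \ldots, g \act \sigma) = g \act \func_A(\sigma)$; (iii) locality transfers since $\supp\paren[\big]{\func_A(\sigma)} = \supp\paren[\big]{A(\sigma,\ldots,\sigma)} \subset \supp(\sigma)$, the inclusion coming from locality of $A$ on the diagonal input whose support equals $\supp(\sigma)$.

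The main subtlety, rather than an obstacle, is ensuring that the $\symgrp$-action used in the equivariance statement on $\sym^m \sects{\vecbndl}$ is compatible with the support-based notion of locality obtained via the diagonal embedding $\sym^m \sects{\vecbndl} \subset \sects{\oplus^m \vecbndl}$ recalled just before Proposition \ref{prop:Taylorlocal}. Both structures descend diagonally from the linear action of $\symgrp$ on $\sects{\vecbndl}$, so this compatibility is automatic. Apart from this bookkeeping check, the theorem is a straightforward packaging of the two preceding transfer results together with the polynomial lift $A \mapsto \func_A$.
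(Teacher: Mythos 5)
Your proof is correct and follows essentially the same route as the paper, which likewise just combines Propositions \ref{prop:Taylorequi} and \ref{prop:Taylorlocal} with $\Ebv = \sects{\vecbndl}$ and $\Fbv = \sects{\vecbndl[F]}$. Your explicit surjectivity argument via the polynomial lift $A \mapsto \frac{1}{m!}A(\sigma,\ldots,\sigma)$ is a welcome addition that the paper leaves implicit (it only remarks afterwards that $\Linloc_{\symgrp}(\sym^m \sects{\vecbndl}, \sects{\vecbndl[F]})$ sits inside $\Cinfloc_{\symgrp}\paren{\sects{\vecbndl}, \sects{\vecbndl[F]}}$), and all three of your verifications check out.
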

\begin{proof}
	For the $\symgrp$-equivariance it is a consequence of \autoref{prop:Taylorequi} where $\Ebv = \sects{\vecbndl}$ and $\Fbv = \sects{\vecbndl[F]}$, and for the locality it is a consequence of \autoref{prop:Taylorlocal}.
\end{proof}

Clearly, since
		$\Linloc_{\symgrp}(\sym^m \sects{\vecbndl}, \sects{\vecbndl[F]}) \subset
		\Cinfloc_{\symgrp}\paren{\sects{\vecbndl}, \sects{\vecbndl[F]}}$
		we may henceforth restrict the study to the elements in $\Linloc_{\symgrp}\paren{\sym^m\sects{\vecbndl}, \sects{\vecbndl[F]}}$.

\section{Extension Principle}
\label{sec:extension}

We know from \autoref{prop:Taylorequilocal} that it suffices to restrict our attention to symmetric multilinear local equivariant maps.
Using the transitivity property of the action of $\symgrp$, we now proceed to show that it suffices to study symmetric, multilinear maps defined on the Taylor expansion of the vector field at one point.

\subsection{Jet Space}


In this section we give some basic definition about jet spaces, and we refer the reader to \cite{Sa89} or \cite[\S\,12]{KoMiSl93} for further information.

Let $\bndlproj{\vecbndl}$ be a vector bundle above the base manifold $\Man$.
Again, we aim at applying the forthcoming results to the tangent bundle $\bndlproj{\Tan\Man}$ in the sequel.

We define the $k$-th order equivalence relation at $x\in\Man$ by $\sigma \sim_x \tau$ if $\sigma$ and $\tau$ have the same Taylor development of order $k$, for two sections $\sigma$, $\tau$ in $\sects{\vecbndl}$.
We define the \demph{$k$-th jet space} $\Jetx{x}$ at $x$ to be the corresponding quotient set of that equivalence relation.
We define $\jet[x]$ to be the corresponding projection.
We thus have
\begin{equation}
	\Jetx{x} = \jet[x]\paren[\big]{\sects{\vecbndl}}
	.
\end{equation}
$\Jetx{x}$ has a natural structure of vector space.

\begin{definition}
	\label{def:orderk}
We say that a function $\func$ is \demph{of order $k$} \cite[\S\,18.16]{KoMiSl93} if it factorizes through $\jet[x]$ at every point $x \in \Man$.
So, if $\func$ is of order $k$, then for each $x\in\Man$ there exists a function 
\begin{equation}
	\lfunc[x] \colon \Jetx{x} \to \fibre{x}{\vecbndl[F]}
\end{equation}
such that
\begin{equation}
	\label{eq:locality}
	\func(\sigma)(x) = \facfunc[x]{\sigma}
	.
\end{equation}
\end{definition}

\subsection{Extension Principle}

Here we take advantage of the idea that if a quantity is equivariant, then it is uniquely defined at a point as long as it is equivariant with respect to the isotropy group at this point \cite[\S\,4.2.6]{AlGaLyVi91}.


Suppose that $\symgrp$ acts on $\vecbndl$ by vector bundle maps.
Then $\symgrp$ acts linearly on the space $\sects{\vecbndl}$ of sections of $\vecbndl$ by
\begin{equation}
	\label{eq:actsection}
	(g \act \sigma) (x) \coloneqq g \act (\sigma(g\inv\act x))
	,
	\qquad \sigma\in\sects{\vecbndl},\quad x\in \Man
	.
\end{equation}

Moreover, that action on $\sects{\vecbndl}$ induces a vector bundle action on $\bndlproj{\Jet[]}$, in such a way that
\begin{equation}
	\label{eq:jetaction}
	g\act\jet[x](\sigma) = \jet[g\act x](g\act\sigma)
	,
	\qquad
	\sigma \in \sects{\vecbndl}
	,
	\quad
	g \in \symgrp
	,
	\quad
	x \in \Man
	.
\end{equation}

Recall also that, choosing an arbitrary point $\origin\in\Man$, the isotropy group $\isogrp$ is the subgroup consisting of the group elements that fix the origin:
\begin{align}
	\label{eq:isogrp}
	\isogrp = \setc{g\in\symgrp}{g\act \origin = \origin}
	.
\end{align}

\begin{proposition}
	\label{prop:extension}
	For each integer $k$, define the mapping $\overline{J}$ defined on the subspace of functions of order $k$ (\autoref{def:orderk}) in $\Cinf\paren[\big]{\sects{\vecbndl}, \sects{\vecbndl[F]}}$ to $\Cinf\paren{\Jetx{\origin}, \fibre{\origin}{\vecbndl[F]}}$ by
	\begin{equation}
		\overline{J}(\func) \coloneqq \lfunc
		.
	\end{equation}
	That mapping $\overline{J}$ induces a mapping $J$ from the subspace of functions of order $k$ in $\func \in \Cinf_{\symgrp}\paren[\big]{\sects{\vecbndl}, \sects{\vecbndl[F]}}$ to $\Cinf_{\isogrp}\paren{\Jetx{\origin}, \fibre{\origin}{\vecbndl[F]}}$.
	 Moreover, $J$ is a linear bijection.


\end{proposition}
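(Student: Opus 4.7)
The plan is to handle the two assertions separately: first that $\overline{J}$ sends $\symgrp$-equivariant functions of order $k$ to $\isogrp$-equivariant functions on $\Jetx{\origin}$, and then that the induced map $J$ is a linear bijection. Linearity is immediate, because $\overline{J}$ is evaluation of the factorising data $\lfunc$ at the single point $\origin$ and both the factorisation \eqref{eq:locality} and the quotient $\jet[\origin]$ are linear in $\sigma$.

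First I would show the $\isogrp$-equivariance of $\lfunc = \lfunc[\origin]$. Let $h \in \isogrp$, so that $h \act \origin = \origin$. Evaluating the equivariance equation $\func(h\act\sigma) = h\act\func(\sigma)$ at $\origin$, and using the identity \eqref{eq:jetaction}, which gives $\jet[\origin](h\act\sigma) = h\act\jet[\origin](\sigma)$, one reads off
\begin{equation}
\lfunc(h\act\jet[\origin](\sigma)) = h\act \lfunc(\jet[\origin](\sigma)),
\end{equation}
which, since $\jet[\origin]$ is surjective onto $\Jetx{\origin}$, gives exactly $\isogrp$-equivariance of $\lfunc$.

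For bijectivity I would exploit the (implicit) transitivity of the $\symgrp$-action on $\Man$, so that every $x \in \Man$ is of the form $g\act\origin$ for some $g \in \symgrp$. For injectivity, combining the factorisation \eqref{eq:locality} at $x = g\act\origin$ with $\symgrp$-equivariance of $\func$ and \eqref{eq:jetaction} yields the pointwise formula
\begin{equation}
\lfunc[x]\paren[\big]{\jet[x](\sigma)} = g\act \lfunc\paren[\big]{g\inv\act\jet[x](\sigma)},
\end{equation}
so $\lfunc$ determines $\lfunc[x]$ for every $x$, and hence determines $\func$. For surjectivity, given $\psi \in \Cinf_{\isogrp}(\Jetx{\origin},\fibre{\origin}{\vecbndl[F]})$ I would define
\begin{equation}
\func(\sigma)(x) \coloneqq g\act\psi\paren[\big]{g\inv\act\jet[x](\sigma)}, \qquad g\act\origin = x.
\end{equation}
Independence of the choice of $g$ is precisely the $\isogrp$-equivariance of $\psi$: any other choice differs by multiplication by an element $h \in \isogrp$ on the right, and equivariance of $\psi$ absorbs $h$. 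A direct check against the defining formulas then shows that $\func$ so constructed is $\symgrp$-equivariant, is of order $k$ by construction, and has $\overline{J}(\func) = \psi$.

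The main obstacle I expect is verifying the smoothness of the extension, because the pointwise definition of $\func(\sigma)$ depends on a choice of $g$ which need not be globally smooth in $x$. I would handle this by working locally, using the fibre-bundle structure $\symgrp \to \symgrp/\isogrp \simeq \Man$ to pick smooth local sections $x \mapsto g_x$ of this projection on a neighbourhood of any point, then gluing the local smooth expressions using the well-definedness of $\func$ independently of the choice of $g$. Once smoothness is established on each local patch, global smoothness follows because the patches agree on overlaps.
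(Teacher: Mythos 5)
Your proposal is correct and follows essentially the same route as the paper's proof: equivariance of $\lfunc$ is read off by evaluating the equivariance identity at $x = g\act\origin$ and specialising to $h\in\isogrp$, and the inverse is built by the same extension formula $\func(\sigma)(x) = g\act\lfunc[]\paren[\big]{g\inv\act\jet[x](\sigma)}$ with well-definedness coming from $\isogrp$-equivariance. Your explicit attention to smoothness of the extension via local sections of $\symgrp\to\symgrp/\isogrp$ addresses a point the paper leaves as "straightforward to check," which is a welcome addition but not a different argument.
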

\begin{proof}
	We first show $\isogrp$-equivariance of the image of $\overline{J}$.
Recall that the equivariance of $\func$ means that
\begin{equation}
	\func(g\act \sigma) = g \act \func(\sigma)
	.
\end{equation}
From the assumption \eqref{eq:locality} on $\func$ we thus obtain that for all $x \in \Man$,
\begin{equation}
	\begin{split}
		\facfunc[x]{g\act\sigma} &= \func(g\act\sigma)(x) \\
							  &= (g\act \func(\sigma))(x) \\
			   &= g \act \facfunc[g\inv\act x]{\sigma}
	.
	\end{split}
\end{equation}
So, for $x = g\act\origin$ we get
\begin{equation}
	\facfunc[x]{g\act \sigma} = g \act \facfunc{\sigma}
	.
\end{equation}

In particular, for $h\in\isogrp$ this gives
\begin{equation}
	\label{eq:facfunc}
\facfunc{h\act\sigma} = h \act \facfunc{\sigma}
,
\qquad h \in \isogrp
.
\end{equation}
Using the defining property \eqref{eq:jetaction} of the action of $\symgrp$ on $\Jetx{\origin}$ we can rewrite \eqref{eq:facfunc} as
\begin{equation}
	\lfunc(h\act \mu) = h \act \lfunc(\mu)
	,
	\qquad \mu \in \Jetx{\origin},\quad h \in \isogrp
	.
\end{equation}
This shows $\isogrp$-equivariance.

Now suppose that we are given $\lfunc[]$ in $\Cinf_{\isogrp}\paren{\Jetx{\origin}, \fibre{\origin}{\vecbndl[F]}}$.
We construct the function
\begin{equation}
	\psi_{g}(\mu) \coloneqq g \act \lfunc[](g\inv \act \mu)
	,
	\qquad
	\mu \in \Jetx{g\act\origin}
	.
\end{equation}
Notice that, using the $\isogrp$-equivariance of $\lfunc[]$ we have $\psi_{gh} = \psi_{g}$ for any $h\in\isogrp$, so for $x=g\act\origin$ we can define
\begin{equation}
	\lfunc[x](\mu) \coloneqq g \act \lfunc[](g\inv \act \mu)
	,
	\qquad
	\mu \in \Jetx{x}
	.
\end{equation}
We may then define the function $\func$ by
\begin{equation}
	\func(\sigma)(x) \coloneqq \facfunc[x]{\sigma}
	,
\end{equation}
and it is straightforward to check that $\func$ is of order $k$ and equivariant.
\end{proof}

\section{Decomposition in Invariant Subspaces}
\label{sec:invsubspaces}
\subsection{Affine Spaces}

We focus now for the rest of the paper to the case of $d$-dimensional affine spaces described in \autoref{sec:affine}.

Note that when we choose the origin $\origin = 0$, the isotropy group defined in \eqref{eq:isogrp} corresponding to the group $\symgrp=\isogrp\ltimes\RR^d$ acting on $\Man=\RR^d$ is simply $\isogrp$.

We will apply the results of the previous sections to the vector bundles
\begin{equation}
	\vecbndl = \vecbndl[F] = \Tan \RR^d
	.
\end{equation}



Observe that the group action \eqref{eq:actsection} is in that case the same as \eqref{eq:groupvfaction}, and thus becomes \eqref{eq:affinevecact} in the affine space case.

The isotropy group $\isogrp$ thus acts by matrix-vector multiplication on
\begin{equation}
 \Mv = \Tan[0]\RR^d \equiv \RR^d
 .
\end{equation}



\subsection{Jet space and Taylor Expansions}

Multilinear maps which are local at a point $x\in\Man$ are automatically of order $k$ at $x$ for some integer $k$, by Peetre's Theorem.
We first need a description of the Taylor expansion at one point.

In the affine case $\Man=\RR^d$, we make the following identification
\begin{align}
	\Tan\Rd \simeq \Man \times \Mv 
\end{align}
So, vector fields, which are sections of the tangent bundles, are now regarded as functions from $\Man$ to $\Mv$.

With this coordinate choice, the jet space of order $k$ at the origin may be rewritten as
\begin{equation}
	\label{eq:jettaylor}
	\Jetx{\origin} \simeq \Ev \otimes \bigoplus_{j=0}^{k} \sym^j \Mv^*
	.
\end{equation}

The prolonged action defined in \eqref{eq:jetaction} is now especially simple.
\begin{proposition}
	\label{prop:prolongaction}
	The prolonged action of $\isogrp$ on the $\Jetx{\origin}$ with the identification \eqref{eq:jettaylor} is the tensor product action of $\isogrp$ on $\Mv$.
\end{proposition}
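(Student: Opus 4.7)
The plan is to unpack the identification \eqref{eq:jettaylor} explicitly in coordinates and then verify by direct computation, using the chain rule, that the action \eqref{eq:jetaction} coincides componentwise with the tensor product action on each summand $\Ev \otimes \sym^j \Mv^*$. Nothing more subtle than Taylor's formula is required; the real content is bookkeeping.

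First I would fix the identification. Since $\vecbndl = \Tan\Rd \simeq \Rd\times\Mv$, a section $\sigma\in\sects{\vecbndl}$ is a smooth map $f\colon\Rd\to\Mv$, and its $k$-jet at the origin is encoded by the tuple $\bigl(f(0),\Der f(0),\ldots,\Der^k f(0)\bigr)$. The $j$-th derivative $\Der^j f(0)$ is a symmetric $j$-linear map $\Mv^{\times j}\to\Mv$, equivalently an element of $\Mv\otimes\sym^j\Mv^*$, and this gives the identification \eqref{eq:jettaylor}. Under this identification, $\jet[\origin](\sigma)$ decomposes as a direct sum of its Taylor components.

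Next I would compute the action on each component. For $h=(\mat,0)\in\isogrp$ the action \eqref{eq:affinevecact} reads $(h\act f)(x) = \mat\, f(\mat\inv x)$. Differentiating $j$ times at the origin and applying the chain rule gives, for any $v_1,\dots,v_j\in\Mv$,
\begin{equation}
	\Der^j(h\act f)(0)(v_1,\ldots,v_j) = \mat\cdot\Der^j f(0)\bigl(\mat\inv v_1,\ldots,\mat\inv v_j\bigr),
\end{equation}
because $\mat\inv\cdot 0 = 0$. Written on the tensor $T_j \coloneqq \Der^j f(0)\in\Mv\otimes\sym^j\Mv^*$, this is precisely $h\act T_j = \mat\otimes(\mat^{-\top})^{\otimes j}\cdot T_j$, where $\mat$ acts on the $\Mv$-factor by its defining representation and $(\mat^{-\top})^{\otimes j}$ acts on the $\sym^j\Mv^*$-factor by the dual prolongation. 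This is the tensor product action of $\isogrp$ on $\Mv\otimes\sym^j\Mv^*$, extended diagonally over the direct sum in \eqref{eq:jettaylor}.

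Finally I would note that the defining property \eqref{eq:jetaction} determines the prolonged action uniquely, so checking it on the explicit representative $\jet[\origin](f) \mapsto \jet[\origin](h\act f)$ as above is sufficient. The only subtlety worth stating carefully is that the basepoint is fixed: because $h\in\isogrp$ satisfies $h\act\origin=\origin$, the evaluation point $\mat\inv\cdot 0$ remains the origin and no translation term appears, which is what makes the action purely tensorial. I do not expect any genuine obstacle; the proof is a short verification once the identification of a jet with its tuple of derivatives at the origin is spelled out.
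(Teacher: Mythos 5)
Your proposal is correct and follows essentially the same route as the paper: the paper's proof also sets $\psi(x)=\mat f(\mat\inv x)$, differentiates $n$ times via the chain rule (by induction), and evaluates at the origin to read off the tensor product action on each $\Ev\otimes\sym^j\Mv^*$ summand. Your version is if anything slightly more explicit about the dual action $(\mat^{-\top})^{\otimes j}$ on the $\sym^j\Mv^*$ factor and about why the fixed basepoint eliminates translation terms, but there is no substantive difference in method.
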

\begin{proof}
	For a fixed vector field $f\in\diff[\RR^d]$, considered as a mapping $\RR^d \to \RR^d$, and an element $\mat \in\GL$, define the function $\psi$ as
	\begin{align}
		\psi(x) \coloneqq \mat \cdot f(\mat\inv x)
	\end{align}
	It follows from an induction proof that
	\begin{align}
		\bracket{\psi^{(n)}(x)}{x_1,\ldots,x_n} = \mat \bracket{f^{(n)}}{x_1,\ldots,x_n}
	\end{align}
	so at the origin $x=\origin$, we have
	\begin{align}
		\psi^{(n)}(\origin) = \mat \cdot f^{(n)}(0)
	\end{align}
	with the action defined on element of a tensor product.
\end{proof}

Since the order $k$ varies, we need the \demph{symmetric algebra} \cite[\S\,7.3]{Gr67}, defined in general for a vector $V$ as
\begin{equation}
	\sym V \coloneqq \bigoplus_{j=0}^{\infty} \sym^j V
	.
\end{equation}
The infinite sum here is taken in the coproduct sense, that is, it is the vector space generated by the finite linear combination of elements in each components.

%

\begin{proposition}
	\label{prop:linjetaction}
	The mapping defined in \autoref{prop:extension} induces the linear bijection
	\label{prop:multilinjet}
		\begin{equation}
			\linloc 
			\equiv
			\Lin_{\isogrp}\paren[\big]{ \sym^m \paren{\Ev \otimes \sym \Mv^*}, \Fv }
		,
		\end{equation}
		where the action of $\isogrp$ on $\sym^m\paren{\Ev \otimes \sym \Mv^*}$ is the tensor product action.
\end{proposition}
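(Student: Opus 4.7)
The plan is to chain together three ingredients already in place. Fix a map $\func \in \linloc$, that is, a symmetric $m$-linear, local, $\symgrp$-equivariant map on vector fields. The multilinear version of Peetre's theorem (cited in the paragraph preceding \eqref{eq:jettaylor}) ensures that $\func$ factors through a finite-order jet at the origin, so $\func$ is of order $k$ in the sense of \autoref{def:orderk} for some integer $k$; transitivity of the translation subgroup of $\symgrp$ combined with equivariance promotes this to a bound that is uniform over all points of $\Rd$.

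I would then apply \autoref{prop:extension} with $\vecbndl = \vecbndl[F] = \Tan\Rd$ and $\origin = 0$, whose isotropy group is exactly $\isogrp$. This produces a linear bijection between order-$k$ $\symgrp$-equivariant smooth maps and $\Cinf_{\isogrp}\paren[\big]{\Jetx{\origin}, \Fv}$. The symmetry and $m$-linearity of $\func$ pass through the assignment $\func \mapsto \lfunc$ unchanged, because that assignment is pointwise evaluation followed by the jet projection and therefore commutes with $\RR$-linear operations in each of the $m$ arguments; the restricted bijection consequently lands in $\Lin_{\isogrp}\paren[\big]{\sym^m \Jetx{\origin}, \Fv}$.

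Finally, I would plug in the identification \eqref{eq:jettaylor}, $\Jetx{\origin} \simeq \Ev \otimes \bigoplus_{j=0}^{k} \sym^j \Mv^*$, and invoke \autoref{prop:prolongaction} to recognise the prolonged $\isogrp$-action as the tensor product action. To absorb the artificial cutoff $k$, I would observe that the finite-order jet spaces form a direct system whose colimit in the coproduct sense used in \autoref{sec:invsubspaces} to define $\sym \Mv^*$ is exactly $\Ev \otimes \sym \Mv^*$; since every multilinear map under consideration originates from some finite $k$, the union of the bijections yields the stated identification with $\Lin_{\isogrp}\paren[\big]{\sym^m(\Ev \otimes \sym \Mv^*), \Fv}$. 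The main obstacle I expect is the Peetre step, specifically pinning down a single order $k$ that serves $\func$ uniformly across $\Rd$; once this book-keeping is carried out, the rest is a routine composition of the previously established bijections.
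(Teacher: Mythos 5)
Your proposal is correct and follows essentially the same route as the paper: Peetre's theorem to get a finite order $k$ near the origin, \autoref{prop:extension} to pass to $\isogrp$-equivariant maps on $\Jetx{\origin}$ (checking that $m$-linearity survives), and \autoref{prop:prolongaction} to identify the action as the tensor product action. The extra bookkeeping you supply (uniformity of $k$ via transitivity, and absorbing the cutoff into the coproduct $\sym \Mv^*$) only spells out steps the paper's proof leaves implicit.
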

\begin{proof}
	First, one has to check that if $\func$ is $m$-linear, then so is $\lfunc$.
	Then, using Peetre's theorem \cite[\S\,19.9]{KoMiSl93}, we know that in a neighbourhood of $\origin$, a local multilinear map is of order $k$, for some integer $k$.
	We know by \autoref{prop:extension} that $\lfunc$ is equivariant, so we obtain the result by applying \autoref{prop:prolongaction}.
\end{proof}

\subsection{Compositions}


\begin{definition}
\label{def:composition}
We define a \demph{composition} as a map $\prt\colon \NN \to \NN$ which has finite support, i.e.
\begin{equation}
	\# \setc{j\in\NN}{\prt(j) \neq 0} < \infty
	.
\end{equation}
For such a composition, we define its \demph{size} $\abs{\prt}$ by
\begin{equation}
	\abs{\prt} \coloneqq \sum_{j=0}^{\infty} \prt(j)
	.
\end{equation}
\end{definition}

Compositions and their size arise naturally because of the following general result.
\begin{lemma}
\label{lma:Greub}
For a sequence of vector spaces $V_i$, $i\in\NN$, the following holds:
\begin{equation}
	\label{eq:termgathered}
	\sym^m \paren[\Big]{\bigoplus_{i=0}^{\infty}V_i} = \bigoplus_{\abs{\prt}=m} \paren[\bigg]{\bigotimes_{j=0}^{\infty} \sym^{\prt(j)}{V_j}}
	.
\end{equation}
\end{lemma}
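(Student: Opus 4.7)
The plan is to use the standard decomposition of the symmetric power of a direct sum, proving it first for two summands, then inductively for any finite number of summands, and finally passing to the colimit for the infinite case.

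First, I would recall the fundamental isomorphism
\begin{equation}
	\sym^m(V \oplus W) = \bigoplus_{a + b = m} \sym^a V \otimes \sym^b W.
\end{equation}
This follows from the fact that $\sym(V \oplus W) = \sym V \otimes \sym W$ as graded commutative algebras, which is itself a direct consequence of the universal property of the symmetric algebra: it is the free commutative algebra functor, and so turns coproducts of vector spaces into coproducts of commutative algebras, which in that category are tensor products. Restricting to the homogeneous component of degree $m$ yields the displayed formula. Alternatively, one can expand $(v_1 + w_1)\cdots(v_m + w_m)$ in $\sym^m(V \oplus W)$ by the multinomial theorem and regroup the terms, the $V$-factors and $W$-factors being symmetrized independently.

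By an immediate induction on $n$, this extends to a formula for $\sym^m\paren[\big]{\bigoplus_{i=0}^{n} V_i}$ as a direct sum, over compositions $\prt$ of size $m$ supported in $\{0, \ldots, n\}$, of the tensor products $\bigotimes_{j=0}^{n} \sym^{\prt(j)} V_j$. This is precisely the restriction of \eqref{eq:termgathered} to compositions with support in $\{0, \ldots, n\}$.

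To pass to the infinite direct sum, I would observe that $\bigoplus_{i=0}^{\infty} V_i$ is the union of its finite subsums, and that every element of $\sym^m\paren[\big]{\bigoplus_{i=0}^\infty V_i}$, being a finite linear combination of elementary symmetric products of $m$ factors, lies in $\sym^m\paren[\big]{\bigoplus_{i=0}^{n} V_i}$ for $n$ sufficiently large. Taking the union of the finite-case formulas over $n$ then yields exactly \eqref{eq:termgathered}. The product $\bigotimes_{j=0}^\infty \sym^{\prt(j)} V_j$ on the right-hand side is well-defined precisely because $\prt$ has finite support: the factors $\sym^0 V_j = \RR$ with $\prt(j) = 0$ trivialize and the remaining tensor product is finite.

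There is essentially no obstacle in the argument; the content is purely formal combinatorics of the symmetric algebra. The only subtlety worth flagging is the correct interpretation of the infinite tensor product on the right-hand side of \eqref{eq:termgathered}, which, thanks to the finite-support condition on compositions, reduces each individual summand to an honest finite tensor product.
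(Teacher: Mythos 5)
Your proposal is correct and follows essentially the same route as the paper, which simply cites the finite-sum decomposition from Greub and declares the extension to the infinite sum straightforward; you merely spell out both halves (the finite case via the binary isomorphism $\sym^m(V\oplus W)=\bigoplus_{a+b=m}\sym^a V\otimes\sym^b W$ and induction, the infinite case via the observation that every degree-$m$ element lives in a finite subsum). Nothing further is needed.
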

\begin{proof}
	The result for a finite sum of spaces $V_i$ is in \cite[\S\,7.8]{Gr67}.
	The extension to an infinite sum is straightforward.
\end{proof}
Note that the seemingly infinite tensor product appearing in \eqref{eq:termgathered} is in fact finite, since for a fixed composition $\prt$, $\prt(j)$ is non zero only for a finite number of $j\in\NN$.




	For any composition $\prt$, define the space
\begin{equation}
	\label{eq:defSym}
	\Sym \coloneqq  \tenexpr[\Ev][\Fv]
.
\end{equation}
\newcommand*\aaababbb{
	\begin{tikzpicture}[setree]
		\placeroots{1}
		\children[1]{child{node {} child{node{}}} child{node{}}}
	\end{tikzpicture}
}
The space $\Sym$ encodes the symmetries of an elementary differential with a given composition.
As an example, consider $\prt = (2,1,1)$, 
for example associated to the tree $\aaababbb$.
Such a composition means that the elementary differential consists of two elements of the form $f^I$, one element of the form $f^I_j$ and one element of the form $f^I_{jk}$.
As we expect a vector, we thus obtain terms of the form $f^If^Jf^K_{i}f^L_{jk} \partial_l$, where we have used capital letters for the upper and minor letters for lower indices, and where $\partial_l$ denots the $l$-th unit vector.
The index set ordered as $l,I,J,K,i,L,j,k \equiv l(IJ)(Ki)(L(jk))$ corresponds to coordinates on the space $\Sym[(2,1,1)] = \Mv \otimes \Mv^* \otimes \Mv^* \otimes (\Mv^* \otimes \Mv) \otimes (\Mv^*\otimes \sym^2 \Mv)$.

\begin{lemma}
	\label{lma:binomial}
	We have the decomposition
	\begin{equation}
			\Lin\paren[\big]{ \sym^m \paren{\Ev \otimes \sym \Mv^*}, \Fv }
			=
		\bigoplus_{\abs{\prt}=m}  \Sym
		.
	\end{equation}
\end{lemma}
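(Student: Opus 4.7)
The plan is to chain \autoref{lma:Greub} with the standard multilinear duality isomorphisms in finite dimensions. The key observation is that for each composition $\prt$ with $\abs{\prt}=m$, the tensor product that arises is genuinely finite-dimensional (only finitely many factors are nontrivial, and each factor is a symmetric power of a finite-dimensional vector space), so no functional-analytic subtleties intervene.

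First I would expand the inner factor as a coproduct, $\Ev \otimes \sym \Mv^* = \bigoplus_{j \in \NN} V_j$ with $V_j \coloneqq \Ev \otimes \sym^j \Mv^*$, using distributivity of tensor product over direct sums. Applying \autoref{lma:Greub} to the family $\{V_j\}_{j\in\NN}$ yields
\begin{equation*}
	\sym^m\bigl(\Ev \otimes \sym \Mv^*\bigr) = \bigoplus_{\abs{\prt}=m} \bigotimes_{j=0}^{\infty} \sym^{\prt(j)} V_j.
\end{equation*}
Since $\Lin({-},\Fv)$ turns a coproduct into a direct sum of $\Lin$-spaces, the claim reduces to proving, for each fixed $\prt$ with $\abs{\prt}=m$, the identification
\begin{equation*}
	\Lin\Bigl(\bigotimes_j \sym^{\prt(j)} V_j,\ \Fv\Bigr) = \Sym.
\end{equation*}
For this last identification the source is finite-dimensional, so I can successively apply $\Lin(X,\Fv) \cong \Fv \otimes X^*$, push the dual through tensor products and symmetric powers via $(A \otimes B)^* \cong A^* \otimes B^*$ and $(\sym^k W)^* \cong \sym^k W^*$, and invoke $V_j^* = \Ev^* \otimes \sym^j \Mv$. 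Assembling the pieces produces $\Fv \otimes \bigotimes_j \sym^{\prt(j)}(\Ev^* \otimes \sym^j \Mv)$, which is $\Sym$ by the definition \eqref{eq:defSym}.

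The one step that deserves a remark is the passage from $\Lin$ of an infinite coproduct to a direct sum of $\Lin$-spaces: strictly speaking the universal property of coproducts produces a direct product, not a direct sum. However, each summand $\Sym$ is finite-dimensional, only one composition $\prt$ at a time is used later in the paper, and the direct sum reading is the natural one in view of the polynomial/jet-space character of the objects under study — in particular it matches the convention already adopted in \autoref{lma:Greub}. Beyond this bookkeeping point, no substantive obstacle arises.
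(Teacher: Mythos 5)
Your proposal is correct and follows essentially the same route as the paper: apply \autoref{lma:Greub} to the summands $V_j = \Ev\otimes\sym^j\Mv^*$, then use finite-dimensionality to identify $\Lin(\Vec,\Fv)\cong\Fv\otimes\Vec^*$ and push the dual through to recover $\Sym$ as in \eqref{eq:defSym}. Your closing remark on the coproduct-versus-product issue for $\Lin$ of an infinite direct sum flags a point the paper itself passes over silently, and your resolution is consistent with the paper's conventions.
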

\begin{proof}
For a fixed composition $\prt$ we define the space
\begin{equation}
	\Vec \coloneqq  \bigotimes_{j=0}^{\infty} \sym^{\prt(j)}(\Ev \otimes \sym^j M^*)
	.
\end{equation}
Using \autoref{lma:Greub} with $V_i = \Ev\otimes \sym^i\Mv^*$, and noticing that $\Ev\otimes\sym \Mv^* = \bigoplus_{i=0}^{\infty} \Ev\otimes \sym^i \Mv^*$ we obtain
\begin{equation}
	\sym^m (\Ev\otimes\sym M^*) = \bigoplus_{\abs{\prt} = m} \Vec
	.
\end{equation}

The space $\Vec$ is finite dimensional, so we have the canonical isomorphism $\Lin(\Vec, \Fv) \equiv \Fv \otimes \Vec^*$.
Moreover, since the space $\Mv$ is finite dimensional, one has $\Sym = \Fv \otimes \Vec^*$.
\end{proof}

For instance, for the composition $\prt=(3,1,2)$, we have
\begin{equation}
	\Sym[(3,1,2)] = \Fv \otimes {\sym^3 \Ev^* \otimes {\Ev^* \otimes \Mv} \otimes \sym^2 \paren{\Ev^* \otimes \sym^2 \Mv}}
	.
\end{equation}

\subsection{Decomposition in $\isogrp$-invariant subspaces}

Note that $\isogrp$ acts on the space $\Lin(\sym^m (\Ev\otimes\sym M^*), \Fv)$, and that equivariant maps are just the invariant maps under that action.
We are interested in describing explicitly this set of equivariant (and thus invariant) maps.
Suppose that a group $\isogrp$ acts on a vector space $\Sym[]$ and that for a given index set $\Prt[]$, $\Sym[]$ is decomposed in $\Sym[] = \bigoplus_{\prt \in \Prt[]} \Sym$, where $\Sym$ is preserved by $\isogrp$.
If an element $x \in \Sym[]$ is invariant, then so are its components $x_{\prt} \in \Sym[]$, and vice versa.
In other words, we have $\Sym[]^{\isogrp} = \bigoplus_{\prt \in \Prt[]} \Sym^{\isogrp}$,
so it suffices to look for invariant elements in the subspaces $\Sym$ for $\prt \in \Prt[]$.

We thus obtain:

\begin{theorem}
	\label{prop:invdecomposition}
	For every integer $m$,
	we have the bijection:
	\begin{equation}
		\label{eq:invdecomposition}
			\Lin_{\isogrp}\paren[\big]{ \sym^m \paren{\Ev \otimes \sym \Mv^*}, \Fv }
		\to
		\bigoplus_{\abs{\prt}=m}  \Sym^{\isogrp}
		.
	\end{equation}
\end{theorem}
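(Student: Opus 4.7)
The plan is to combine \autoref{lma:binomial} with the general fact about direct sum decompositions of group representations that is already sketched in the paragraph preceding the theorem. The target decomposition already exists at the level of vector spaces, so the task reduces to checking that it is compatible with the $\isogrp$-action and then restricting to invariants.

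First I would recall that by \autoref{lma:binomial} we have the decomposition
\begin{equation}
	\Lin\paren[\big]{\sym^m(\Ev\otimes\sym\Mv^*), \Fv} = \bigoplus_{\abs{\prt}=m} \Sym
	.
\end{equation}
The next step is to verify that each summand $\Sym$ is stable under $\isogrp$. This is where I would invoke \autoref{prop:prolongaction}: the action of $\isogrp$ on $\Ev\otimes\sym\Mv^*$ is the tensor product action of $\isogrp$ on $\Mv$ (trivial on the $\Ev$ factor, standard on $\Mv$ and its symmetric powers). Consequently the induced action on $\sym^m(\Ev\otimes\sym\Mv^*)$ preserves the grading by the number of factors of each ``type'' $\Ev\otimes\sym^j\Mv^*$, which is exactly what a composition $\prt$ records. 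Hence the decomposition of \autoref{lma:Greub} used in the proof of \autoref{lma:binomial} is a decomposition into $\isogrp$-subrepresentations, and dualizing and tensoring with $\Fv$ preserves this, so $\Sym$ is $\isogrp$-invariant for every $\prt$.

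Once this is in place, the standard observation recalled in the text applies: if a group $\isogrp$ acts on a vector space $W = \bigoplus_{\prt} W_{\prt}$ by preserving each summand, then an element is invariant if and only if each of its components is invariant, so $W^{\isogrp} = \bigoplus_{\prt} W_{\prt}^{\isogrp}$. Applying this to $W = \Lin(\sym^m(\Ev\otimes\sym\Mv^*), \Fv)$ with $W_{\prt} = \Sym$ yields
\begin{equation}
	\Lin_{\isogrp}\paren[\big]{\sym^m(\Ev\otimes\sym\Mv^*), \Fv} = \bigoplus_{\abs{\prt}=m} \Sym^{\isogrp}
	,
\end{equation}
which is exactly the bijection claimed.

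I do not expect any serious obstacle here, since all the nontrivial content has already been unpacked earlier: \autoref{lma:binomial} supplies the underlying vector space decomposition, \autoref{prop:prolongaction} identifies the action as the tensor product one so that the grading by $\prt$ is $\isogrp$-stable, and the direct-sum compatibility of invariants is elementary. The only point where some care is needed is to make the equivariance of the isomorphism in \autoref{lma:binomial} explicit, but this follows at once from the naturality of the identifications $\Lin(V,\Fv)\cong \Fv\otimes V^*$ and the functoriality of $\bigotimes$ and $\sym^{\prt(j)}$ with respect to $\isogrp$-maps.
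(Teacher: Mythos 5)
Your proposal is correct and follows essentially the same route as the paper: invoke \autoref{lma:binomial} for the underlying vector space decomposition, observe that each summand $\Sym$ is preserved by the $\isogrp$-action, and conclude via the elementary fact that invariants commute with direct sums of subrepresentations. The extra detail you supply on why each $\Sym$ is $\isogrp$-stable is exactly the ``observation'' the paper's one-line proof leaves implicit.
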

\begin{proof}
	The result is a consequence of 
	\autoref{lma:binomial} and the observation that the action of $\isogrp$ preserves $\Sym$ for any composition $\prt$.
\end{proof}

\section{Scalings and Finiteness}
\label{sec:equivarianthomogeneous}


We now proceed to show that if the isotropy group $\isogrp$ contains scalings, then the dimension of $\linloc[\Tan\Man][\Tan\Man]$ is finite.
We first need the definition of a derived composition, which is to be understood as a device to count the ``arrows'', as explained in \autoref{sec:generalequiseries}.
\begin{definition}
\label{def:dercomposition}
For a composition $\prt$ we define its \demph{derived composition} $\prt'\colon \NN \to \NN$ by
\begin{equation}
	\prt'(j) \coloneqq j \prt(j) \qquad j\in\NN
.
\end{equation}
\end{definition}

The following results considerably limits the possible non-trivial compositions to consider, as we shall precisely see in \autoref{prop:scalingequiseries}.
\begin{proposition}
	\label{prop:scaling}
	Assume that $\isogrp$ contains the scaling group $\GL[1]$.
	If $|\prt| \neq |\prt'| + 1$ then $\Sym^{\isogrp} = 0$.
\end{proposition}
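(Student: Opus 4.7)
The strategy is to compute the weight of the one-parameter scaling subgroup $\GL[1] \hookrightarrow \GL$ on the finite-dimensional representation $\Sym$, and observe that an invariant of a nontrivial weight-space must be zero. So concretely, I view $\Sym$ as a $\GL[1]$-representation, write it as a sum of weight spaces, and show that the only occurring weight is $1 + \abs{\prt'} - \abs{\prt}$. Under the hypothesis $\abs{\prt} \neq \abs{\prt'} + 1$ this weight is nonzero, hence the $\GL[1]$-fixed vectors, and a fortiori the $\isogrp$-fixed vectors, vanish.

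First I would record how $t \in \GL[1]$, acting on $\Mv = \RR^d$ by scalar multiplication, acts on the building blocks:
\begin{equation}
t \act v = t v \text{ for } v \in \Mv, \quad t \act \omega = t\inv \omega \text{ for } \omega \in \Mv^*, \quad t \act (v_1 \cdots v_j) = t^j (v_1 \cdots v_j) \text{ on } \sym^j \Mv.
\end{equation}
Consequently $\Mv^* \otimes \sym^j \Mv$ is a single weight space of weight $j-1$, and therefore $\sym^{\prt(j)}(\Mv^* \otimes \sym^j \Mv)$ sits in weight $\prt(j)(j-1)$. The tensor product in the definition \eqref{eq:defSym} adds these weights, and the extra factor $\Mv$ out front contributes weight $+1$.

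Summing, every nonzero element of $\Sym$ transforms under $t \in \GL[1]$ by the scalar
\begin{equation}
t^{\,1 + \sum_{j=0}^{\infty} \prt(j)(j-1)} = t^{\,1 + \abs{\prt'} - \abs{\prt}},
\end{equation}
using $\abs{\prt'} = \sum_j j\,\prt(j)$ and $\abs{\prt} = \sum_j \prt(j)$ from \autoref{def:dercomposition} and \autoref{def:composition}. An element $x \in \Sym$ satisfying $t \act x = x$ for all $t \in \RR^{\times}$ is therefore forced to be zero unless the exponent vanishes, i.e.\ unless $\abs{\prt} = \abs{\prt'} + 1$. Since $\Sym^{\isogrp} \subset \Sym^{\GL[1]}$ by the inclusion assumption on $\isogrp$, this yields $\Sym^{\isogrp} = 0$ whenever $\abs{\prt} \neq \abs{\prt'} + 1$.

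The main thing to be careful about is bookkeeping of the weights on the dual and symmetric powers---in particular the sign on $\Mv^*$ and the fact that the outer $\sym^{\prt(j)}$ multiplies, rather than adds, the inner weight $j-1$ by $\prt(j)$. Once this is handled the result is immediate; there is no representation-theoretic obstacle beyond the Schur-type observation that a nonzero weight space contains no invariant vectors for a torus.
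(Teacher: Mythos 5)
Your proof is correct and is essentially the paper's argument: both rest on the observation that the scaling subgroup $\GL[1]\subset\isogrp$ acts on $\Sym$ by the single weight $1+\abs{\prt'}-\abs{\prt}$, which kills all invariants unless $\abs{\prt}=\abs{\prt'}+1$. The only cosmetic difference is that the paper first injects $\Sym$ equivariantly into the unsymmetrized space $\Ten\simeq\ten^{\abs{\prt}}\Mv^*\otimes\ten^{\abs{\prt'}+1}\Mv$ in order to quote the standard scaling argument there, whereas you compute the weight directly on the symmetric-power factors of $\Sym$.
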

\begin{proof}
	Consider the tensor product 
	\begin{equation}
		\label{eq:deften}
	\ten^m V \coloneqq \underbrace{V \otimes \cdots \otimes V}_{m}
	.
\end{equation}
	The symmetric tensor product $\sym^m V$ can be included into $\ten^m V$ by the standard symmetrisation map \cite[\S\,24.8]{KoMiSl93}.
	Let us define
	\begin{equation}
		\label{eq:defTen}
		{\Ten} \coloneqq \tenexpr[\Mv][\Mv][\ten]
		.
	\end{equation}
	We can thus construct an injection 
	\begin{equation}
 \Sym
\hookrightarrow
{\Ten}
.
	\end{equation}

	Now, since the symmetrisation map is $\isogrp$-equivariant, we obtain an injection
	\begin{equation}
		\label{eq:invsyminj}
		 \Sym^{\isogrp} \hookrightarrow \Ten^{\isogrp}
		.
	\end{equation}
	Finally, notice that
	\begin{equation}
		{\Ten} \simeq \ten^{\abs{\prt}}\Mv^* \otimes \ten^{\abs{\prt'} + 1}\Mv
		.
	\end{equation}
	Since $\GL[1]\subset \isogrp$, we obtain by a scaling argument \cite[\S\,24.3]{KoMiSl93} that if $\abs{\prt} \neq \abs{\prt'}+1$ the only invariant tensor in ${\Ten}$ is zero, i.e., ${\Ten}^{\isogrp} = 0$, which, using \eqref{eq:invsyminj}, implies that $\Sym^{\isogrp}= 0$.
\end{proof}

\begin{theorem}
	\label{prop:scalingequiseries}
	Under the assumption of \autoref{prop:scaling}, we have for any integer $m$
	\begin{equation}
		\linloc[\Tan\Man][\Tan\Man]	
		\equiv \bigoplus_{\substack{\abs{\prt} = m\\\abs{\prt'} = m -1}}  \Sym^{\isogrp}
		.
	\end{equation}
	In particular this implies that the space $\linloc[\Tan\Man][\Tan\Man]$ has finite dimension.
\end{theorem}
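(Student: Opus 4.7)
The plan is to assemble three previously established results in sequence and then verify finiteness from the combinatorics of the surviving compositions. First, I would invoke Proposition \ref{prop:linjetaction} to identify $\linloc[\Tan\Man][\Tan\Man]$ with the space $\Lin_{\isogrp}\paren[\big]{\sym^m(\Ev \otimes \sym \Mv^*), \Fv}$ of $\isogrp$-equivariant $m$-linear maps on the jet space at the origin. This is exactly the object for which Theorem \ref{prop:invdecomposition} provides the decomposition
\begin{equation}
  \Lin_{\isogrp}\paren[\big]{\sym^m(\Ev \otimes \sym \Mv^*), \Fv} \;\equiv\; \bigoplus_{\abs{\prt}=m} \Sym^{\isogrp},
\end{equation}
so that $\linloc[\Tan\Man][\Tan\Man]$ is canonically isomorphic to $\bigoplus_{\abs{\prt}=m} \Sym^{\isogrp}$.

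Next I would apply Proposition \ref{prop:scaling}, which, under the standing assumption that $\GL[1] \subset \isogrp$, guarantees that $\Sym^{\isogrp} = 0$ whenever $\abs{\prt} \neq \abs{\prt'} + 1$. Restricting the direct sum to those compositions $\prt$ with $\abs{\prt}=m$ and $\abs{\prt} = \abs{\prt'}+1$, i.e.\ $\abs{\prt'} = m-1$, yields exactly the claimed bijection. This step is really just a "pruning" of the decomposition given by Theorem \ref{prop:invdecomposition}.

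It remains to establish finite-dimensionality. There are two ingredients here, both straightforward. First, the indexing set is finite: a composition $\prt$ satisfying $\abs{\prt}=m$ and $\abs{\prt'} = \sum_{j} j\,\prt(j) = m-1$ must have $\prt(j)=0$ for every $j \geq m$, and each value $\prt(j)$ is bounded by $m$, so only finitely many such $\prt$ exist. Second, for any such $\prt$ the space $\Sym$ defined in \eqref{eq:defSym} is a finite tensor product (since $\prt$ has finite support) of finite-dimensional vector spaces (since $\Ev = \Fv = \Mv = \RR^d$), hence finite-dimensional; therefore its $\isogrp$-invariant subspace $\Sym^{\isogrp}$ is finite-dimensional as well. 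A finite sum of finite-dimensional spaces is finite-dimensional, which gives the last assertion.

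I do not expect a genuine obstacle: the theorem is essentially a bookkeeping step that packages Propositions \ref{prop:linjetaction}, \ref{prop:scaling} and Theorem \ref{prop:invdecomposition}. The only mildly delicate point is making the identification of $\linloc[\Tan\Man][\Tan\Man]$ with the jet-space model of Proposition \ref{prop:linjetaction} carefully, but since the hypotheses of that proposition are already in force in the affine setting fixed at the start of \autoref{sec:invsubspaces}, the invocation is immediate.
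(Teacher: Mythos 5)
Your proposal is correct and follows essentially the same route as the paper: identify $\linloc$ with the jet-space model via \autoref{prop:linjetaction} and \autoref{prop:invdecomposition}, prune the direct sum using \autoref{prop:scaling}, and conclude finiteness from the finiteness of the surviving index set together with the finite-dimensionality of each $\Sym$. The paper's own proof is just a terser version of the same argument (its ``crude estimate'' is your observation that $\prt(j)\neq 0$ for some $j\geq m$ forces $\abs{\prt'}\geq m$).
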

\begin{proof}
	A crude estimate yields that if $\prt(j) \neq 0$ for some $j\geq m$, then $\abs{\prt'} > m$, so $\Sym^{\isogrp} = 0$ by \autoref{prop:scaling}.
	The final statement is a consequence of the fact that $\Sym$ (and hence $\Sym^{\isogrp}$) has finite dimension for any composition $\prt$.
\end{proof}

For example, for $m = 3$, we see in \autoref{tab:dtrees} there are two compositions satisfying the requirement $\abs{\prt} = \abs{\prt'}+1 = 3$, namely $(2,0,1)$ and $(1,2)$.
\autoref{prop:scalingequiseries} for $m=3$ gives thus 
\begin{equation}
	\Linloc_{\symgrp}\paren[\big]{\sym^3 \diff[\Rd], \diff[\Rd]}
		\equiv 
		\Sym[(2,0,1)]^{\isogrp} \oplus \Sym[(1,2)]^{\isogrp}
		.
\end{equation}

\section{$\GL$ invariance and Aromatic Trees}
\label{sec:affineequivariant}

In this section we assume that the isotropy group is as big as possible, namely that
\begin{equation}
	\isogrp = \GL
	.
\end{equation}

The $\GL$ invariant tensors on tensor products of $\RR^d$ are well known.
However, our task is complicated by the presence of \emph{symmetric} tensor products.
The right combinatorial tool to tackle this difficulty turns out to be aromatic forests, which are graphs with at most one outgoing arrow at each vertex.

\subsection{Graphs}

Recall that a (finite) \demph{graph} is the collection of two finite sets, $N$ (the \demph{nodes}, or \demph{vertices}) and $A$ (the \demph{arrows}), and a two maps $s,t \colon A \to N$ (the \demph{source map} and \demph{target map}, respectively).

We will draw an arrow $a$ from a node $v$ to a node $w$ in the following manner:
\begin{equation}
	\begin{tikzpicture}
		\begin{scope}[etree, scale=1.5]
		\placeroots{2}
		\joinmid{1}{2}
	\end{scope}
	\node[left] at (tree1) {$w$};
	\node[right] at (tree2) {$v$};
	\coordinate (mid) at ($(tree1)!.5!(tree2)$);
	\node[above] at (mid) {$a$};
\end{tikzpicture}
\end{equation}
We thus have for this example $N = \set{v, w}$, $A = \set{a}$, and $s(a) = v$, $t(a) = w$.

We denote the set of nodes with $j$ incoming arrows by $N_j$:
\begin{equation}
	N_j \coloneqq \setc[\big]{v\in N}{\#  \set{t^{-1}(v)} = j}
	\qquad
	j\in \NN
.
\end{equation}
To each graph $\diag = (N,A,s,t)$ we associate the composition $\prt_{\diag}$ defined by
\begin{equation}
	\label{eq:defdiagcomposition}
	\prt_{\diag}(j) \coloneqq \# N_j
	\qquad
	j \in \NN
	.
\end{equation}

Observe that we have the disjoint unions $N = \cup_{j\in\NN} N_j$ and $A = \cup_{j\in\NN}t\inv(N_j)$, and $\# t\inv\paren{N_j} = j \# N_j$ so
\begin{equation}
	\label{eq:prtnodearrow}
	\# N = \abs{\prt}
	,
	\qquad
	\# A = \abs{\prt'}
	.
\end{equation}

%

\subsection{Aromatic Forests}
\label{sec:aromaticforests}

We focus on special graphs called \demph{aromatic forests}:

\begin{definition}
	An \demph{aromatic forest} is a graph $(N,A,s,t)$ such that $s$ is injective.
	The set of aromatic forests is denoted $\Diag[]$.
\end{definition}

For an aromatic forest $(N,A,s,t)$, we call the set $N \setminus s(A)$ the set of \demph{roots}.
An aromatic forest with one root will be called an \demph{aromatic tree}.
So an aromatic forest $(N,A,s,t)$ is an aromatic tree if and only if $\# N = \# A + 1$, or in accordance with \eqref{eq:prtnodearrow}, $\abs{\prt} = \abs{\prt'} + 1$.
Here is an example of an aromatic tree, where the root (characterized by not being the source of any arrow) is emphasized
\begin{equation}
	\begin{tikzpicture}[etree]
		\placeroots{3}
		\children[1]{child{node{}} child{node{}}}
		\children[3]{child{node{}}}
		\jointrees{2}{3}
		\node[color=emphcolor] at (tree1){};
\end{tikzpicture}
\end{equation}
For that aromatic tree we have $\prt = (3,1,2)$, so $\abs{\prt} = 6$, $\prt' = (0,1,4)$, $\abs{\prt'} = 5$, and one can directly check that there are indeed 6 nodes and 5 arrows, in accordance with \eqref{eq:prtnodearrow}.


We define the set $\Diag$ of aromatic forests with composition $\prt$ by
\begin{equation}
	\Diag \coloneqq \setc{\diag \in \Diag[]}{\prt_{\diag} = \prt}
	.
\end{equation}
The set of aromatic forests with $n$ roots and composition $\prt$ is defined accordingly by
\begin{equation}
	\Diag^{n} \coloneqq \setc{\diag \in \Diag}{\abs{\prt} - \abs{\prt'} = n}
	.
\end{equation}
In particular, $\Dit$ is the set of \emph{aromatic trees} with composition $\prt$.
We will also denote by $\spangle{\Dit}$ the \emph{free vector space} generated by the trees with composition $\prt$.

\subsection{Structure of Aromatic Forests}
One defines connected components of an aromatic forest as the corresponding connected components of the undirected graph.
We define an \demph{aroma} as a connected aromatic forest with $n$ nodes and $n$ arrows.
We give the following result without proof, as we will not use it.
\begin{proposition}
	\label{prop:aforreststruct}
	An aromatic forest with $n$ roots is uniquely decomposed in $n$ regular trees, and an arbitrary number of aromas.
\end{proposition}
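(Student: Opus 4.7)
The plan is to decompose the aromatic forest into its connected components (in the undirected sense), and show that each such component is forced by the injectivity of $s$ to be either a tree or an aroma. Since connected components are canonically determined, the decomposition is unique.

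First I would fix a connected component $C$ of the aromatic forest $\diag = (N, A, s, t)$, let $k = \# C$, and let $r$ be the number of roots of $\diag$ lying in $C$. Because $s$ is injective, each node of $C$ is the source of at most one arrow, so the number of arrows within $C$ equals the number of non-root nodes of $C$, namely $k - r$. (Note that no arrow can leave $C$, since $C$ is a connected component.)

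Next I would use the underlying undirected graph of $C$, which is connected on $k$ vertices with $k - r$ edges. Since a connected undirected graph on $k$ vertices must have at least $k - 1$ edges, we get $k - r \geq k - 1$, i.e.\ $r \leq 1$. This is the key structural constraint: a connected aromatic forest has either $r = 1$ root or $r = 0$ roots. In the first case, $\# A = k - 1$, so the undirected graph is a tree and the component is a (rooted, directed) tree. In the second case, $\# A = k$, so the underlying undirected graph has exactly one cycle, and every node has exactly one outgoing arrow; this is precisely the definition of an aroma.

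Finally I would observe that roots of $\diag$ are partitioned among the connected components, and by the dichotomy above each component contributes either $0$ or $1$ root. Hence the number of tree components equals the total number of roots $n$, while the number of aroma components is arbitrary. Uniqueness follows from the uniqueness of the partition into connected components. The only slightly delicate step is the bookkeeping argument $\# A_C = k - r$ combined with the connectedness bound $\# A_C \geq k - 1$; everything else is routine.
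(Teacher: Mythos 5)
The paper states this proposition explicitly \emph{without} proof (``We give the following result without proof, as we will not use it''), so there is nothing to compare against; your argument stands on its own, and it is correct. The decomposition into undirected connected components, the count $\#A_C = k - r$ from injectivity of $s$, and the bound $k - r \geq k-1$ forcing $r \leq 1$ is exactly the right mechanism, and the two resulting cases match the paper's definitions (a connected aromatic forest with one root and $k-1$ arrows is a rooted tree; one with no root and $k$ arrows is an aroma by definition). Two small points are worth making explicit. First, aromatic forests admit self-loops and antiparallel pairs of arrows (e.g.\ the aroma $\ATbapab$ giving $\Div(f)$), so the underlying undirected graph may be a multigraph with loops; the inequality still goes the right way, since only non-loop, non-parallel edges contribute to connectivity and their number is at most $k-r$, but you should say so rather than silently invoke the simple-graph bound. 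Second, in the $r=1$ case you should note why the orientation is automatically ``towards the root'': iterating the unique outgoing arrow from any non-root node cannot cycle (the underlying graph is acyclic), hence terminates at the unique node with no outgoing arrow. Neither point is a gap, only bookkeeping you should surface.
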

For example, the aromatic forest
\begin{equation}
\begin{tikzpicture}[etree]
		\placeroots{5}
		\children[1]{child{node{}} child{node{}}}
		\children[3]{child{node{}}}
		\jointrees{2}{3}
		\jointrees{4}{5}
\end{tikzpicture}
\end{equation}
has one root (so it is an aromatic tree), and
can be decomposed into the regular tree $\begin{tikzpicture}[setree]\placeroots{1}\children[1]{child{node{}} child{node{}}}\end{tikzpicture}$ and the aromas $\begin{tikzpicture}[setree]\placeroots{2}\children[2]{child{node{}}}\jointrees{1}{2}\end{tikzpicture}$ and $\begin{tikzpicture}[setree]\placeroots{2}\jointrees{1}{2}\end{tikzpicture}$.

\subsection{Aromatic Trees and Invariant Tensors}

We now establish a correspondence between aromatic trees and elements of $\Sym^{\GL}$.
This is not the only result of this kind, and relations between graphs and invariant tensors are well known.
We refer to \cite{Ma08} and references therein for similar results and similar proofs.

\begin{theorem}
	\label{thm:diagram}
	Assume that $\Mv \equiv \RR^d$.
	There is a linear surjection from the space $\spangle{\Dit}$ to $\Sym^{\GL}$.
	Moreover, if $d \geq \abs{\prt}$, then that surjection is in fact a bijection.
\end{theorem}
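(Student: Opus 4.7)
My plan is to compute $\Sym^{\GL}$ by realising $\Sym$ as a space of invariants inside the purely tensorial $\Ten$ from \eqref{eq:defTen} and then appealing to the first fundamental theorem for $\GL$.

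The first step is to describe $\Sym$ as $\Sym = \Ten^{H}$ for an explicit finite group $H$. By definition, for each $j$ the factor $\sym^{\prt(j)}(\Ev^{*}\otimes \sym^{j}\Mv)$ is the invariant subspace under a wreath product $S_{j}\wr S_{\prt(j)}$: the $S_{j}$ permutes the $j$ upper-index slots inside one block, and the $S_{\prt(j)}$ permutes the $\prt(j)$ blocks of that degree. Setting $H \coloneqq \prod_{j}\bigl(S_{j}\wr S_{\prt(j)}\bigr)$, the symmetrisation inclusion $\Sym\hookrightarrow\Ten$ identifies $\Sym$ with $\Ten^{H}$, and since the $H$-action commutes with the $\GL$-action we obtain
\begin{equation}
 \Sym^{\GL}=(\Ten^{H})^{\GL}=(\Ten^{\GL})^{H}.
\end{equation}

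The second step is to parametrise $\Ten^{\GL}$. For $\diag\in\Dit$ we have $\abs{\prt}=\abs{\prt'}+1$, so $\Ten\simeq (\Mv^{*})^{\otimes \abs{\prt}}\otimes \Mv^{\otimes \abs{\prt}}$. Index the $\abs{\prt}$ copies of $\Mv^{*}$ by a set $N$ of size $\abs{\prt}$ (one element per ``node'', i.e.\ per $\Ev^{*}$ factor, which inherits the type $j$ of its block), and index the $\abs{\prt}$ copies of $\Mv$ by the disjoint union $\{*\}\sqcup S$, where $*$ is the slot coming from $\Fv$ and $S$ is the set of pairs (node of type $j$, one of its $j$ upper-index slots). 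The first fundamental theorem for $\GL$ (the invariant tensor theorem used already in the proof of \autoref{prop:scaling}) gives that $\Ten^{\GL}$ is spanned by permutation tensors $T_{\sigma}$, one for each bijection $\sigma\colon N\to\{*\}\sqcup S$, and that the $T_{\sigma}$ are linearly independent whenever $d\geq\abs{\prt}$.

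The third step is the combinatorial heart of the proof: a bijection $\sigma\colon N\to\{*\}\sqcup S$ is exactly the data of a labelled aromatic tree with node set $N$ whose composition is $\prt$. Indeed, for each node $v\in N$, the value $\sigma(v)$ is either $*$ (declaring $v$ the root) or a specific incoming slot at some target node $w$ (declaring an arrow $v\to w$ occupying that slot); bijectivity says each node has exactly one outgoing edge except one root, and each incoming slot is used once, i.e.\ every type-$j$ node has exactly $j$ incoming arrows. Moreover, the $H$-action is precisely relabelling: the $S_{\prt(j)}$ factor relabels the type-$j$ nodes, and the $S_{j}$ factors relabel the incoming slots at each type-$j$ node. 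Two labelled structures are $H$-equivalent iff they represent the same unlabelled aromatic tree in $\Dit$. Hence $H$-orbits of bijections are in canonical bijection with $\Dit$, and the orbit-sum map $\diag\mapsto \sum_{\sigma\in H\cdot\sigma_{\diag}} T_{\sigma}$ is a well-defined linear map $\spangle{\Dit}\to (\Ten^{\GL})^{H}=\Sym^{\GL}$.

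Finally, this map is surjective because averaging any element of $\Ten^{\GL}$ over $H$ gives a linear combination of orbit sums; and when $d\geq\abs{\prt}$ the $T_{\sigma}$ are linearly independent in $\Ten$, so orbit sums attached to distinct aromatic trees are linearly independent in $\Sym^{\GL}$, yielding the bijection. The main obstacle I expect is the bookkeeping in step three, namely writing down cleanly the identification of the $H$-action on bijections with the relabelling action on labelled aromatic trees, since this is where the definitions of composition, incoming slots and symmetrisations must be reconciled without ambiguity.
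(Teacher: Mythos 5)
Your proposal is correct and follows essentially the same route as the paper: both rest on the invariant tensor theorem to span $\Ten^{\GL}$ by permutation tensors and on identifying orbits of permutations under the stabilizer of the slot structure (your $H$ is the paper's $\symgrp_{\prt}$) with aromatic trees in $\Dit$. The only difference is presentational: the paper passes from $\Ten^{\GL}$ to $\Sym^{\GL}$ via the quotient by $K_{\otimes}$ and a diagram chase, whereas you realise $\Sym^{\GL}$ as the invariants $(\Ten^{\GL})^{H}$ and use orbit sums, which is equivalent in characteristic zero.
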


\newcommand*\sze{\abs{\prt}}

\begin{proof}
	\mbox{  }
	\begin{enumerate}
		\item
			Suppose first that $\abs{\prt} \neq \abs{\prt'} +1$.
			On the one hand, we get from \autoref{prop:scalingequiseries} that $\Sym^{\GL} = 0$.
			On the other hand, the set $\Dit$ is empty, so $\spangle{\Dit} = 0$, and the result is thus vacuously true.
			In the remaining of the proof, we thus assume that 
			\begin{equation}
				\label{eq:treeprt}
				\abs{\prt} = \abs{\prt'} + 1
			.
		\end{equation}
		\item
			\begin{enumerate}
				\item
The goal of the proof is to obtain all the solid lines in the following commuting diagram of horizontal exact sequences.
We refer the reader to \autoref{sec:thmdiagexample} for a concrete example of the maps occurring in the diagram, which are defined later in this proof.
\begin{equation}
	\begin{tikzpicture}
		\matrix (m) [commdiag]
		{
			0 \&  K_{\otimes} \&  \Ten^{\GL} \& \Sym^{\GL} \& 0 \\
			0 \&  K_{\Sigma} \&  \spangle{\Sigma_{\sze}} \& \spangle{\Dit} \& 0 \\
		};
		\path[->]
		(m-1-1) edge  (m-1-2)
		(m-1-2) edge (m-1-3)
		(m-1-3) edge node[auto] {$\pi$} (m-1-4)
		(m-1-4) edge (m-1-5)
		(m-2-1) edge (m-2-2)
		(m-2-2) edge (m-2-3)
		(m-2-3) edge node[auto] {$\varpi$} (m-2-4)
		(m-2-4) edge (m-2-5)
		;
		\draw[->] (m-2-2) edge (m-1-2);
		\draw[->] (m-2-3) edge node[auto] {$\delta$} (m-1-3);
		\draw[-|] (m-2-3) -- (m-1-3);
		\draw[->, dashed] (m-2-4) edge node[auto] {$\eldiff$} (m-1-4);

	\end{tikzpicture}
\end{equation}
One obtain the final result by a diagram chasing argument.
First, one shows that once all the functions corresponding to solid lines are defined, then $\eldiff$ is uniquely defined.
It is also clear that if $\delta$ is surjective, then so is $\eldiff$.
\item
	By construction, the spaces $K_{\otimes}$ and $K_{\Sigma}$ are going to be isomorphic.
Since $\spangle{\Dit} = \spangle{\Sigma_{\sze}}/K_{\Sigma}$ and $\Sym^{\GL} = \Ten^{\GL}/K_{\otimes}$, a dimension argument gives that $\eldiff$ is bijective when $\delta$ is. 

We now proceed to define the spaces and maps involved in the diagram.
\end{enumerate}
		\item
	\begin{enumerate}
		\item
			Recall the definition of $\Ten$ in \eqref{eq:defTen}, using the notation \eqref{eq:deften}:
\begin{equation}
{\Ten} = \tenexpr[\Mv][\Mv][\ten]
.
\end{equation}
Using the assumption \eqref{eq:treeprt}, we make the standard identification
\begin{equation}
	\label{eq:TenLin}
	\Ten \equiv \Lin(\underbrace{\Mv\otimes\cdots\otimes\Mv}_{\abs{\prt}}, \underbrace{\Mv\otimes\cdots\otimes\Mv}_{\abs{\prt}})
		.
\end{equation}
It is tantamount to an arbitrary enumeration of the occurrences of $\Mv$ and $\Mv^*$ in the definition of $\Ten$.

We will group together the tensor products as
\begin{equation}
	\Ten = \Mv \otimes \bigotimes_{j=0}^{\infty} \bigotimes_{i=1}^{\prt(j)} T_{i}^{j}
\end{equation}
with
\begin{equation}
T_{i}^{j} \coloneqq {\Mv^* \otimes \ten^j \Mv}
.
\end{equation}

Since the numbering is arbitrary, we choose the first component to be numbered $\Mv_1$:
\begin{equation}
	\Ten = \Mv_1 \otimes \bigotimes_j \bigotimes_i T_{i}^{j} = \Lin(\ldots, \Mv_1 \otimes \cdots)
	.
\end{equation}

\item
Next, we define a mapping $\tau$ which records the relations between $\Mv$ and $\Mv^*$ in the expression of $\Ten$.
The mapping is defined as follows.
If the numbering is such that
\begin{equation}
	T_{i}^{j} =   \Mv^*_n \otimes \Mv_{n_1} \otimes \cdots \otimes \Mv_{n_j}
\end{equation}
then we define
\begin{equation}
	\tau(n_k) = n \qquad k = 1,\ldots,j
	.
\end{equation}
The mapping $\tau$ is thus defined for the integers $2,\ldots,\sze$.

\end{enumerate}
\item
	\begin{enumerate}
		\item
We denote by $\Sigma_n$ the permutation group of the set with $n$ elements.
For any element $\sigma \in \Sigma_{\sze}$ we may define a graph $(N,A,s,t)$ as follows.
We define the set of arrows $A = \set{2,\ldots,\sze}$ and nodes $N = \set{1,\ldots,\sze}$.
The target map $t$ is defined to be $\tau$, and the source map $s$ is the restriction of $\sigma\inv$ on $A$.

The graph is an aromatic forest because $s$ is injective.
Moreover, since $\# A  = \# N -1$, the aromatic forest is an aromatic tree (it is straightforward to see that the root is the node $\sigma\inv(1)$, although we will not use that observation).
Finally, one checks that the composition of the aromatic tree is $\prt$.
We thus obtain a map $\widetilde{\varpi}$ from $\Sigma_{\sze}$ to $\Dit$.

\item
Denote by $\Sigma_{A}$ and $\Sigma_{N}$ the permutation groups of the sets $A$ and $N$ respectively.
The product group $\Sigma_{A}\times\Sigma_{N}$ acts on the functions in $N^A$ by
\begin{equation}
(g_A,g_N)\cdot \xi \coloneqq g_N \circ \xi \circ g_A\inv \qquad \xi \in N^A
.
\end{equation}

We denote by $\symgrp_{\prt}$ the stabilizer subgroup of the function $\tau$, i.e.:
\begin{equation}
	\symgrp_{\prt} \coloneqq \setc{(g_A,g_N)\in \Sigma_{A}\times\Sigma_{N}}{(g_A,g_N)\cdot \tau = \tau}
	.
\end{equation}

\item
A graph isomorphism is a permutation of the nodes and arrows which is compatible with the source and arrow maps.
Permutations in $\symgrp_{\prt}$ are exactly the permutations compatible with the target map.
As a result, $\widetilde{\varpi}(\sigma) = \widetilde{\varpi}(\sigma')$ if and only $\sigma$ and $\sigma'$ are in the same orbit of the group $\symgrp_{\prt}$, i.e., if $\sigma' = (g_A,g_N) \cdot \sigma$, for some $(g_A,g_N) \in\symgrp_{\prt}$.

\item
We denote by
\begin{equation}
	\spangle{\Sigma_{\sze}}
\end{equation}
the free vector space generated by the elements of $\Sigma_{\abs{\prt}}$.

The map $\widetilde{\varpi}$ extends to a linear map $\varpi$ from $\spangle{\Sigma_{\sze}}$ to $\spangle{\Dit}$.

We may thus define the subspace $K_{\Sigma} \subset \spangle{\Sigma_{\sze}}$ which is generated by the elements $g\cdot \sigma - g'\cdot \sigma$ for $g,g' \in \symgrp_{\prt}$ and $\sigma \in \Sigma_{\prt}$.
The subspace $K_{\Sigma}$ is the kernel of the linear map $\varpi$.
\end{enumerate}
\item
	\begin{enumerate}
\item
We now  show how $\symgrp_{\prt}$ acts on $\Ten$.
On the one hand, $\Sigma_A$ acts on elementary tensors of $\ten^{\sze} \Mv$ by permuting every elements except the first one, that is
\begin{equation}
	g_A \cdot \paren[\big]{x_1 \otimes \cdots \otimes x_{\sze}} = x_1 \otimes x_{g_A(2)} \otimes\cdots \otimes x_{g_A(\sze)}
	.
\end{equation}
The permutation group $\Sigma_N$ acts on the elementary tensors $\ten^{\sze}\Mv$ by permuting all the elements.
Both actions are extended by linearity to the whole of $\ten^{\sze}\Mv$.

This induces a linear action of $\Sigma_A \times \Sigma_N$ on $\Ten$ considered as a space of mappings from $\ten^{\sze}\Mv$ to $\ten^{\sze}\Mv$ (as in \eqref{eq:TenLin}) by
\begin{equation}
	\paren[\big]{(g_A,g_N)\cdot f}(x) \coloneqq g_A \cdot f(g_N\inv \cdot x)
.
\end{equation}
In particular, this induces an action of $\symgrp_{\prt}$ on $\Ten$ as $\symgrp_{\prt}$ is a subgroup of $\Sigma_{A} \times \Sigma_{N}$.

\item
We denote by $\pi$ the projection map from $\Ten$ to $\Sym$.
By definition of $\tau$, we have $\pi(\varphi) = \pi(\varphi')$ if and only if $\varphi$ and $\varphi'$ are in the same  $\symgrp_{\prt}$ orbit.

\item
We define $K_{\otimes}$ as the subspace of $\Ten$ generated by the elements $g\cdot \varphi - g' \cdot \varphi$ for $g,g'\in\symgrp_{\prt}$ and $\varphi\in\Ten$.
$K_{\otimes}$ is thus the kernel of the linear projection map $\pi$.

Moreover, since the action of $\symgrp_{\prt}$ and $\GL$ commute, we obtain that $\pi$ projects $\Ten^{\GL}$ onto $\Sym^{\GL}$ and $K_{\otimes}$ is also the kernel of that projection (which we also denote by $\pi$).
\end{enumerate}

\item
Now, for any permutation $\sigma \in \Sigma_{\abs{\prt}}$ we associate an element $\delta(\sigma)$ of $\Ten$ as the unique linear map $\delta(\sigma)$ satisfying
\begin{equation}
	\delta(\sigma) = \paren[\big]{x_1 \otimes \cdots \otimes x_{\abs{\prt}} \mapsto x_{\sigma(1)} \otimes \cdots \otimes x_{\sigma(\abs{\prt})}}
\end{equation}
on all elementary tensors $x_1 \otimes \cdots \otimes x_{\abs{\prt}}$.

According to the invariant tensor theorem \cite[\S\,1]{Ma08}, \cite[\S\,24.4]{KoMiSl93}, \cite[\S\,3.1]{KrPr96}, \cite[Th.~2.1.4]{Fu98}, the map $\delta$ is a surjection onto $\Ten^{\GL}$, and it is a bijection if $d \geq \abs{\prt}$.


\end{enumerate}
\end{proof}

\subsection{Example}
\label{sec:thmdiagexample}

\newcommand*\ATcab{\begin{tikzpicture}[setree]
	\placeroots{1}[.3]
	\children[1]{child{node{}} child{node{}}}
\end{tikzpicture}}
\newcommand*\ATba{%
\begin{tikzpicture}[setree]
	\placeroots{2}
	\children[2]{child{node{}}}
	\jointrees{2}{2}
\end{tikzpicture}}

\newcommand*\prtm{(2,0,1)}

We choose for instance the composition $\prt = \prtm$.
The reader may want to compare with \cite{Ma08} where a similar example is studied.
We choose the numbering
\begin{equation}
	\Ten[\prtm] = \Mv_1 \otimes \Mv^*_1 \otimes \Mv^*_2 \otimes \paren{\Mv^*_3 \otimes  \Mv_2 \otimes \Mv_3}
	.
\end{equation}
The arrow set is $A = \set{2,3}$ and the node set is $N = \set{1,2,3}$.
The map $\tau$ associated to the numbering is defined by $\tau(2) = \tau(3) = 3$.
One checks that $\symgrp_{\prtm}$ is isomorphic to $\Sigma_2 \times \Sigma_2$, acting on $\Sigma_3$ by swapping the nodes $1$ and $2$, or the two arrows.
For instance, the identity permutation $(1,2,3)$ is on the one hand equivalent to the permutation $(1,3,2)$ by arrow swapping, and on the other hand equivalent to $(2,1,3)$ by node swapping.


We make the identification
\begin{equation}
	\Ten[\prtm] \equiv \Lin( \Mv_1 \otimes \Mv_2 \otimes \Lin(\Mv_2 \otimes \Mv_3, \Mv_3), \Mv_1)
,
\end{equation}
and
\begin{equation}
	\Sym[\prtm] \equiv \Lin(\sym^2 \Mv \otimes \Lin(\sym^2\Mv, \Mv), \Mv)
	.
\end{equation}
We give the results of the maps defined in the proof of \autoref{thm:diagram} in \autoref{tab:example}.

\begin{table}
	\centering
	\begin{tabular}{l|l|l|l}
	\toprule
	$\sigma \in \Sigma_3$ & $\delta(\sigma)$ & $\pi\circ\delta(\sigma)$ & $\varpi(\sigma)$ \\
	\midrule
	$(1,2,3)$ & $x \Tr\paren[\big]{\zeta(y, \cdot)}$ & \multirow{5}{*}[5pt]{$\frac{1}{2}\paren[\big]{x \Tr(\zeta(y)) + y \Tr(\zeta(x))}$}  & \multirow{5}{*}[5pt]{$\ATba$} \\
	$(2,1,3)$ & $y \Tr\paren[\big]{\zeta(x, \cdot)}$ &&\\
	$(1,3,2)$ & $x \Tr\paren[\big]{\zeta(\cdot, y)}$ &&\\
	$(3,1,2)$ & $y \Tr\paren[\big]{\zeta(\cdot, x)}$ &&\\
	\midrule
	$(2,3,1)$ & $\zeta(x,y)$ & \multirow{2}{*}[2pt]{$\zeta(x,y)$} & \multirow{2}{*}[2pt]{$\ATcab$}\\
	$(3,2,1)$ & $\zeta(y,x)$ &&\\
	\bottomrule
\end{tabular}
	\caption{
The maps $\delta$, $\pi$ and $\varpi$ are defined in the proof of \autoref{thm:diagram}.
The variables of an element of $\Ten[\prtm]$ are labeled $x,y,\zeta$, where $\zeta \in \Lin(\ten^2 \Mv, \Mv)$, and similarly for an element of $\Sym[\prtm]$.
The permutations in $\Sigma_3$ are grouped by their $\symgrp_{\prtm}$-orbits.
	}
	\label{tab:example}
\end{table}



%
%

It now follows from \autoref{tab:example} that the linear map $\eldiff$ defined in the proof of \autoref{thm:diagram} is defined on the basis of $\spangle{\Dit[(2,0,1)]}$ by
\begin{equation}
	\eldiff\paren[\big]{\ATcab} = \bracket[\Big]{x,y,\zeta \mapsto \zeta(x,y)}
,
\qquad 
\eldiff\paren[\Big]{\ATba} = \bracket[\Big]{ x,y,\zeta \mapsto  \frac{1}{2}\paren[\big]{x \Tr(\zeta(y)) + y \Tr(\zeta(x))} }
.
\end{equation}

\subsection{Degeneracies and the one-dimensional case}
\label{sec:degeneracy}

The map $\delta$ defined in the proof of \autoref{thm:diagram} is only bijective if the dimension $d$ is big enough.
This results in $\eldiff$ not being necessarily surjective either, which we see as linear dependency between elementary differentials associated to trees of a given composition.
Interestingly, it is possible to generate a basis of the kernel of $\eldiff$ for any given dimension, following \cite[\S\,II.1.3]{Fu98}.

In dimension one, it is particularly simple.
In that case, the image of $\eldiff$ in $\Sym$ has always dimension one, i.e., $\dim\paren[\big]{\eldiff(\spangle{\Dit})} = 1$.
In other words, the elementary differential of  the aromatic trees of a given composition $\prt$ all collapse to the same elementary differential.
The exact result is that for any given partition $\prt$,
\begin{equation}
	\label{eq:degeneracyoned}
	\eldiff(\gamma) = \prod_{j \in \NN} \paren[\big]{f^{(j)}}^{\prt(j)} 
	\qquad
	\forall \gamma \in \Dit[\prt] 
	\qquad 
	d = 1
	.
\end{equation}

In dimension two, one example of degeneracy is
\begin{equation}
	\eldiff\paren[\bigg]{
	\begin{tikzpicture}[setree,]
		\placeroots{3}
		\jointrees{2}{2}
		\jointrees{3}{3}
	\end{tikzpicture}
	+ 2\ 
	\begin{tikzpicture}[setree,]
		\placeroots{1}
		\children[1]{child{node{} child{node{}}}}
	\end{tikzpicture}
	-2\ 
	\begin{tikzpicture}[setree,]
		\placeroots{2}
		\children[1]{child{node{} }}
		\jointrees{2}{2}
	\end{tikzpicture}
	-
	\begin{tikzpicture}[setree,]
		\placeroots{3}
		\jointrees{2}{3}
	\end{tikzpicture}
}
	= 0 
	\qquad
	\text{if $d = 2$.}
\end{equation}

\section{Proof of the main results}

\subsection{Generalized Aromatic B-series}
\label{sec:generalequiseries}

Our main result, \autoref{thm:affineequiseries} is based on a more general result which is valid on affine spaces (see \autoref{sec:affine}) for any isotropy group $\isogrp\subset\GL$.

The following result gives rise to a generalization of aromatic B-series with any isotropy group.
Its proof is given in \autoref{sec:prooftaylorhomogeneous}.
\begin{theorem}
	\label{thm:taylorhomogeneous}
	There is a linear bijection from
\begin{equation}
	\bigoplus_{\abs{\prt} = m}  \Sym^{\isogrp}
\end{equation}
to the space of Taylor development of order $m$ at the zero vector field of local, $\symgrp$-equivariant mappings $\meth\colon \diff[\RR^d] \to \diff[\RR^d]$.
\end{theorem}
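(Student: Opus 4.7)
\medskip

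The plan is to assemble the theorem by chaining together the three machineries already developed: the transfer argument of \autoref{sec:contagion}, the extension/Peetre principle of \autoref{sec:extension}, and the invariant-subspace decomposition of \autoref{sec:invsubspaces}. Concretely, I would interpret ``space of Taylor developments of order $m$'' as the image of the $m$-th Taylor term map $T^m$ restricted to local $\symgrp$-equivariant smooth maps from $\diff[\RR^d]$ to itself, and then identify this image successively with each space in the chain
\begin{equation}
	\Linloc_{\symgrp}\paren[\big]{\sym^m \diff[\Rd], \diff[\Rd]} \;\;\longleftrightarrow\;\; \Lin_{\isogrp}\paren[\big]{\sym^m(\Ev\otimes\sym\Mv^*),\Fv} \;\;\longleftrightarrow\;\; \bigoplus_{\abs{\prt}=m} \Sym^{\isogrp}.
\end{equation}

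First, I would invoke \autoref{prop:Taylorequilocal} with $\vecbndl=\vecbndl[F]=\Tan\RR^d$ to land the image of $T^m$ inside the first space above, and to observe that $T^m$ is surjective onto this space (indeed, any equivariant local $m$-linear map is its own $m$-th Taylor term, so the projection admits a section by inclusion). Second, I would apply \autoref{prop:linjetaction}, which combines the extension principle of \autoref{prop:extension} with \autoref{prop:prolongaction} and Peetre's theorem, to produce the linear bijection with $\Lin_{\isogrp}\paren[\big]{\sym^m(\Ev\otimes\sym\Mv^*),\Fv}$. Third, I would apply \autoref{prop:invdecomposition} to decompose this space as $\bigoplus_{\abs{\prt}=m}\Sym^{\isogrp}$ using \autoref{lma:binomial} and the fact that the $\isogrp$-action preserves each summand $\Sym$.

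The only genuinely delicate point is the middle step. The extension principle of \autoref{prop:extension} is stated for functions of a fixed jet order $k$, whereas Taylor terms at zero a priori need not be of any finite order. The trick, already packaged inside \autoref{prop:linjetaction}, is that \emph{multilinearity} plus locality forces finite order in a neighbourhood of $\origin$ by the multilinear version of Peetre's theorem, so the extension principle applies pointwise. I would therefore spell out carefully that a local symmetric $m$-linear map $\func$ gives rise, at each point $x$, to a well-defined linear map $\lfunc[x]$ on $\sym^m \Jetx{x}$; equivariance with respect to the transitive $\symgrp$-action then reduces everything to $\lfunc[\origin]$, which by \autoref{prop:prolongaction} must be $\isogrp$-invariant with respect to the tensor-product action on $\sym^m(\Ev\otimes\sym\Mv^*)$.

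Finally, I would verify that the composition of the three bijections is linear and that the inverse direction is literally constructive: starting from $\psi\in\bigoplus_{\abs{\prt}=m}\Sym^{\isogrp}$, one reads off an invariant multilinear map on $\sym^m(\Ev\otimes\sym\Mv^*)$, extends it by the translation action of $\RR^d\subset\symgrp$ to a local equivariant $m$-linear $\func\colon\diff[\RR^d]\to\diff[\RR^d]$, and notes that $T^m(\func)=\func$. The main obstacle I expect is bookkeeping rather than content: making sure the identification of ``Taylor development of order $m$'' with the image of $T^m$ is interpreted correctly, and that the surjectivity of $T^m$ from smooth equivariant local maps onto multilinear equivariant local maps is properly justified by exhibiting the section $\func\mapsto\func$ on the multilinear subspace.
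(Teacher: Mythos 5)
Your proposal is correct and follows essentially the same route as the paper: the paper's proof (via \autoref{thm:taylorhomogeneousformula}) likewise chains \autoref{prop:Taylorequilocal}, the bijection $J$ from \autoref{prop:linjetaction} together with \autoref{prop:invdecomposition}, and then passes to Taylor developments, differing only in that it makes the last step explicit through the homogenisation map $\Pol$ and the standard Taylor formula rather than identifying the Taylor development directly with the image of $T^m$. Your added remarks on the surjectivity of $T^m$ and on why Peetre's theorem supplies the finite order needed for the extension principle are consistent with what the paper leaves implicit.
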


The relation with \autoref{sec:introaffequiseries} is that $\bigoplus_{\abs{\prt}=m}\Sym$ can be regarded as a generalisation of the space of aromatic forests with $m$ nodes.
The interpretation of this result is that the requirement of locality and equivariance places a considerable restriction on the possible form of the Taylor development of the modified vector field.

Note however that, in stark contrast with \autoref{thm:affineequiseries},  one cannot rule out that the dimension of each Taylor term be infinite dimensional without further assumptions on the isotropy group $\isogrp$.
Such an example is the case $\symgrp = \RR^d$ for which $\isogrp = \one$.
In that case, even the space of \emph{linear}, local equivariant maps is infinite dimensional.

However, if $\isogrp$ contains scalings, then the Taylor development of order $m$ have finite dimension.
The following refines \autoref{thm:taylorhomogeneous} if $\isogrp$ contains scalings (see \autoref{prop:scalingequiseries}):
	\begin{equation}
		\label{eq:Tenfinitesum}
	\bigoplus_{\abs{\prt} = m}  \Sym^{\isogrp}
		 =
		 \bigoplus_{\substack{\abs{\prt} = m\\\abs{\prt'} = m -1}}  \Sym^{\isogrp}
		.
	\end{equation}
	As this is a \emph{finite} sum of finite dimensional spaces, it follows from \autoref{thm:taylorhomogeneous} that the Taylor terms of any order of $\symgrp$-equivariant maps $\diff[\RR^d]\to\diff[\RR^d]$ are of \emph{finite dimension}.

	In a sense, the size $\abs{\prt}$ of a composition can be interpreted as a number of nodes, and the size $\abs{\prt'}$ can be interpreted as a number of arrows (see \eqref{eq:prtnodearrow}).
	As a result, \autoref{thm:taylorhomogeneous} along with \eqref{eq:Tenfinitesum} now associates Taylor developments of order $m$ with abstract versions of ``aromatic trees with $m$ nodes'' (i.e., ``aromatic forests with $m$ nodes and $m-1$ arrows''), which are always in finite numbers.

\subsection{General Affine Spaces}
\label{sec:prooftaylorhomogeneous}

\newcommand*\Pol{\mathcal{P}}

The \autoref{thm:taylorhomogeneous} will be proved once we prove the following equivalent result.

Recall that a symmetric $m$-multilinear map $L$ has a homogeneous version of degree $m$ that we denote $\Pol\paren{L}$, and which is obtained by repeating the argument, i.e.
\begin{equation}
	\Pol(L)(f) \coloneqq L(f,\ldots,f)
.
\end{equation}

\begin{theorem}
	\label{thm:taylorhomogeneousformula}
Given an affine space $\Man\equiv\RR^d$ with symmetry group $\symgrp = \isogrp\ltimes\RR^d$,
the space of Taylor development of order $m$ at the zero vector field of local, $\symgrp$-equivariant mappings $\meth\colon \diff[\RR^d] \to \diff[\RR^d]$ is equal to
\begin{equation}
	 \Pol \circ J\inv \paren[\Bigg]{\bigoplus_{\abs{\prt}=m} \Sym^{\isogrp}}
	,
\end{equation}
where the map $J$ is defined in \autoref{prop:multilinjet}.
\end{theorem}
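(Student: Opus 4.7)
The plan is to chain together the transfer theorem, the extension principle, and the invariant decomposition, then pass from symmetric multilinear maps back to smooth nonlinear ones by polarisation.

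For the forward inclusion, take any smooth, local, $\symgrp$-equivariant map $\meth \colon \diff[\RR^d] \to \diff[\RR^d]$. Its order-$m$ Taylor term at the zero vector field is, by definition, the homogeneous polynomial $\Pol(T^m(\meth))$ (up to the immaterial factor $m!$). Theorem \ref{prop:Taylorequilocal} places $T^m(\meth)$ inside $\Linloc_{\symgrp}(\sym^m\sects{\Tan\RR^d},\sects{\Tan\RR^d})$; Proposition \ref{prop:linjetaction} identifies this space via $J$ with $\Lin_{\isogrp}(\sym^m(\Ev\otimes\sym\Mv^*),\Fv)$; and Theorem \ref{prop:invdecomposition} decomposes the latter as $\bigoplus_{\abs{\prt}=m}\Sym^{\isogrp}$. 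Consequently $\Pol(T^m(\meth))$ lies in $\Pol\circ J^{-1}\paren[\big]{\bigoplus_{\abs{\prt}=m}\Sym^{\isogrp}}$.

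For the reverse inclusion, I would proceed by explicit construction. Given $L \in \bigoplus_{\abs{\prt}=m}\Sym^{\isogrp}$, set $\widetilde{L} \coloneqq J^{-1}(L)$, viewed as an element of $\Linloc_{\symgrp}(\sym^m\sects{\Tan\RR^d},\sects{\Tan\RR^d})$, and define the candidate map $\meth_L(f) \coloneqq \widetilde{L}(f,\ldots,f) = \Pol(\widetilde{L})(f)$. One then verifies that $\meth_L$ is smooth (as a homogeneous polynomial of degree $m$ in $f$), $\symgrp$-equivariant (by linearity of the action and equivariance of $\widetilde{L}$), and local (if $f$ vanishes on a neighbourhood of $x$, then so do all $m$ arguments of $\widetilde{L}(f,\ldots,f)$, and the multilinear locality of $\widetilde{L}$ forces the value to vanish there too). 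A direct differentiation gives $T^m(\meth_L) = m!\,\widetilde{L}$ at the origin, so the order-$m$ Taylor term of $\meth_L$ coincides with $\Pol\circ J^{-1}(L)$ up to the combinatorial factor $m!$, which is absorbed in the vector space. This yields the reverse inclusion.

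The main obstacle is the verification in the reverse direction that polarisation transports both locality and $\symgrp$-equivariance from the symmetric multilinear setting back to the smooth nonlinear setting. Equivariance is essentially immediate from the linearity of the $\symgrp$-action combined with the multilinearity of $\widetilde{L}$, but locality requires the slightly more careful observation that the diagonal evaluation $f \mapsto \widetilde{L}(f,\ldots,f)$ does not enlarge supports. Once these two checks are in place, the bijection $J$ of \autoref{prop:linjetaction} together with the decomposition of \autoref{prop:invdecomposition} and the polarisation map $\Pol$ assemble into the claimed identification.
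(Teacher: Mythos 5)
Your proposal is correct and follows essentially the same route as the paper: chain \autoref{prop:Taylorequilocal}, \autoref{prop:linjetaction} and \autoref{prop:invdecomposition}, then pass to the Taylor term via $\Pol$. The only difference is that you spell out the reverse inclusion by explicitly constructing $\meth_L = \Pol(J\inv(L))$ and checking its locality and equivariance, whereas the paper absorbs this surjectivity into the statement that $T^m$ is a projection \emph{onto} $\Linloc_{\symgrp}(\sym^m\diff[\Rd],\diff[\Rd])$ (which holds precisely because that space sits inside $\Cinfloc_{\symgrp}(\diff[\Rd],\diff[\Rd])$); your extra verification is exactly the content of that remark, so the two arguments coincide in substance.
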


\begin{proof}
Recall that we had in \eqref{eq:diffsect} the convention that $\diff = \sects{\vecbndl[\Tan\Man]}$.
Applying \autoref{prop:Taylorequilocal} with $\vecbndl = \vecbndl[F] = \Tan\RR^d$, we obtain that $T^m$ is a surjection from $\Cinfloc_{\symgrp}\paren[\big]{\diff[\Rd]}$ to $\linloc$.

By \autoref{prop:invdecomposition}, the map $J$ is a linear bijection between $\linloc$ and $\bigoplus_{\abs{\prt}=m}  \Sym^{\isogrp}$.

We thus have
\begin{equation}
	\Pol \circ T^m \paren[\Big]{\Cinfloc_{\symgrp}\paren[\big]{\diff[\Rd], \diff[\Rd]}} = \Pol \circ J\inv \paren[\Bigg]{\bigoplus_{\abs{\prt}=m} \Sym^{\isogrp}}
	.
\end{equation}
	By the standard Taylor formula \cite[\S\,5.12]{KrMi97}, $\meth$ is expanded in terms of the form $\Pol \circ T^m(\meth)$, which finishes the proof.
\end{proof}

\subsection{Affine Case}
\label{sec:mainproof}

\autoref{thm:affineequiseries} will thus be proved once we have proved the following slightly more general result.

\begin{theorem}
	Assume that $\symgrp = \Aff = \GL \ltimes \RR^d$.
	If a smooth mapping $\meth\colon \diff[\RR^d] \to \diff[\RR^d]$ is local and $\symgrp$-equivariant, then its Taylor development at order $m$ around the zero vector field is spanned by elementary differential corresponding to aromatic trees with $m$ nodes.
\end{theorem}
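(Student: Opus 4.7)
The plan is to chain together the three main structural results already established in the paper. By \autoref{thm:taylorhomogeneousformula} applied to $\symgrp = \Aff$ with isotropy $\isogrp = \GL$, the $m$-th Taylor term of a local, affine-equivariant $\meth$ lies in $\Pol \circ J\inv\paren[\big]{\bigoplus_{\abs{\prt}=m} \Sym^{\GL}}$. So the task reduces to identifying the $\GL$-invariants in each $\Sym$ with aromatic trees having $m$ nodes.

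First, I would invoke \autoref{prop:scalingequiseries} (which applies since $\GL$ contains the scaling subgroup $\GL[1]$): this collapses the direct sum over all compositions of size $m$ to the sum over compositions $\prt$ satisfying the tree condition $\abs{\prt} = \abs{\prt'} + 1 = m$. By the identification \eqref{eq:prtnodearrow}, these are precisely the compositions of aromatic trees with $m$ nodes (and hence $m-1$ arrows). So the Taylor term lies in $\Pol \circ J\inv\paren[\big]{\bigoplus_{\abs{\prt}=m,\,\abs{\prt'}=m-1} \Sym^{\GL}}$.

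Next, I would apply \autoref{thm:diagram}: for each such composition $\prt$, the map $\eldiff\colon \spangle{\Dit} \to \Sym^{\GL}$ is a linear surjection. Taking the direct sum over all tree-type compositions of size $m$, every element of $\bigoplus_{\abs{\prt}=m,\,\abs{\prt'}=m-1} \Sym^{\GL}$ is a linear combination of images $\eldiff(\diag)$ for aromatic trees $\diag$ with $m$ nodes. Pushing this through $\Pol \circ J\inv$ expresses the Taylor term as a linear combination of the corresponding elementary differentials evaluated on $f$. The only step that requires any care is checking that the composite $\Pol \circ J\inv \circ \eldiff$ sends each generator $\diag \in \Dit$ to exactly the elementary differential $\eldiff(\diag)$ in the sense defined in \autoref{sec:introaffequiseries}; this is a bookkeeping check between the combinatorial definition (labelled nodes, index contractions via Einstein summation) and the tensorial action described in \autoref{prop:prolongaction}, and it is essentially what the concrete \autoref{sec:thmdiagexample} illustrates.

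The main obstacle, if any, is purely notational: tracking the identifications between the abstract $\GL$-invariant tensors produced by \autoref{thm:diagram} and the explicit elementary differentials $\eldiff(\diag)$ of the introduction. No new hard analysis is required beyond the four preceding structural results (transfer, extension, scaling finiteness, and the invariant tensor theorem), and \autoref{thm:affineequiseries} itself then follows by summing the resulting Taylor terms and invoking the standard Taylor formula as in the proof of \autoref{thm:taylorhomogeneousformula}.
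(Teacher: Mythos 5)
Your proposal is correct and follows essentially the same route as the paper: chain \autoref{thm:taylorhomogeneousformula} with the surjection $\eldiff$ of \autoref{thm:diagram} and identify $\Pol\circ J\inv\circ\eldiff$ with the elementary differential map of \autoref{sec:introaffequiseries}. The only cosmetic difference is that you invoke \autoref{prop:scalingequiseries} explicitly to restrict to tree-type compositions, whereas the paper absorbs this into \autoref{thm:diagram}, which is vacuous for non-tree compositions.
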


\begin{proof}

	Using \autoref{thm:diagram}, we have a surjection $\eldiff$ from $\spangle{\Dit}$ to $\Sym^{\GL}$.

	We thus obtain a linear surjection, also denoted by $\eldiff$, mapping $\bigoplus_{\abs{\prt}=m}\spangle{\Dit}$ onto $\bigoplus_{\abs{\prt}=m}\Sym^{\GL}$.


	Using \autoref{thm:taylorhomogeneousformula}, we obtain that $\Pol\circ J\inv \circ \eldiff$ maps the space of aromatic trees with $m$ nodes onto the space of Taylor terms of local, $\symgrp$-equivariant smooth functions.

	We conclude by observing that the elementary differential defined in \autoref{sec:introaffequiseries} is none other than the map $\Pol\circ J\inv \circ \eldiff$.

\end{proof}


\subsection{The One-dimensional Case}

In dimension one, the property observed in \eqref{eq:degeneracyoned} immediately gives the following corollary of \autoref{thm:affineequiseries}.

\begin{corollary}
	If a map from $\Cinf(\RR,\RR)$ to $\Cinf(\RR,\RR)$ is local and affine equivariant, then its Taylor development is a B-series.
\end{corollary}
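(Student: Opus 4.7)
The plan is to invoke \autoref{thm:affineequiseries} and then collapse the resulting aromatic B-series to an ordinary B-series using the dimension-one degeneracy \eqref{eq:degeneracyoned}. The main theorem yields an expansion $\meth(f) \sim \sum_{\gamma \in \Dit[]} b(\gamma)\,\eldiff(\gamma)(f)$, and for $d = 1$ the degeneracy formula asserts that $\eldiff(\gamma) = \prod_j (f^{(j)})^{\prt_\gamma(j)}$ depends only on the composition $\prt_\gamma$. Regrouping aromatic trees by composition therefore turns the expansion into
\[
	\meth(f) \sim \sum_{\prt} \bigg(\sum_{\gamma \in \Dit} b(\gamma)\bigg) \prod_j (f^{(j)})^{\prt(j)},
\]
where $\prt$ ranges over compositions satisfying $|\prt| = |\prt'|+1$.

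To recognize this as an ordinary B-series, it suffices to exhibit, for every such $\prt$, at least one \emph{regular} rooted tree $\tau_\prt$ of composition $\prt$. Given such a choice, setting $c(\tau_\prt)$ equal to the bracketed coefficient above and $c = 0$ on every other regular tree produces a B-series that reproduces the expansion, because a regular tree is in particular an aromatic tree and so \eqref{eq:degeneracyoned} gives $F(\tau_\prt) = \prod_j (f^{(j)})^{\prt(j)}$ in dimension one as well.

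The only nontrivial ingredient is the combinatorial lemma that every composition $\prt$ with $|\prt| = |\prt'|+1$ is realized by some regular tree, which I would prove by induction on $|\prt|$. The relation $|\prt|-1 = |\prt'| = \sum_j j\prt(j)$ forces $\prt(0) \geq 1$, for otherwise $|\prt'| \geq |\prt|$. The base case $|\prt|=1$ is handled by \ATb. For $|\prt| \geq 2$, choose any $j \geq 1$ with $\prt(j) \geq 1$ (possible since $|\prt'|\geq 1$) and let $\prt_{\mathrm{new}}$ be obtained from $\prt$ by decrementing $\prt(0)$ and $\prt(j)$ and incrementing $\prt(j-1)$; a short check gives $|\prt_{\mathrm{new}}| = |\prt'_{\mathrm{new}}|+1$, so by induction there exists a regular tree $\tau_{\mathrm{new}}$ of composition $\prt_{\mathrm{new}}$, and attaching a new leaf to any node of $\tau_{\mathrm{new}}$ having $j-1$ children (guaranteed by $\prt_{\mathrm{new}}(j-1)\geq 1$) yields a regular tree of composition $\prt$. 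This elementary combinatorial step is the only substantive piece of the argument and is where I would take the most care.
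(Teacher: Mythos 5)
Your proposal is correct and follows the same route the paper intends: apply \autoref{thm:affineequiseries} and collapse the aromatic elementary differentials via the one-dimensional degeneracy \eqref{eq:degeneracyoned}, so that each composition $\prt$ with $\abs{\prt}=\abs{\prt'}+1$ contributes a single monomial $\prod_j (f^{(j)})^{\prt(j)}$ that can be carried by a regular rooted tree. The paper states this as an immediate consequence and omits the combinatorial verification that every such composition is realized by a regular tree; your induction supplies exactly that missing detail, and it is sound.
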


%
%
%

\section{Conclusion and Outlook}
\label{sec:conclusion}
\subsection{Aromatic Runge--Kutta methods}
\label{sec:equirk}
In~\cite{Bu72} it was shown that Runge--Kutta methods are dense in the space of B-series. A natural question is to search for a class of numerical integrators which is dense in this larger space of aromatic B-series. 
As a motivation for the definition of \emph{aromatic Runge--Kutta methods}, we note that aromatic B-series may be understood as a modification of classical B-series, where the scalar field $\RR$ is replaced by the ring $\Ra$ of aromas. In a similar manner aromatic RK methods are generalized from classical RK methods by replacing the coefficients $a_{i,j}$ and $b_j$ with aromatic scalars from $\Ra$.

\begin{definition}An $s$-stage \emph{aromatic Runge--Kutta method}
for integrating $y' = f(y)$, $y(0)=y_0$ from $t=0$ to $t=1$
 is defined as
\begin{align*}
K_j & =  f\left(y_0 + \sum_{\ell=1}^s a_{j,\ell} K_\ell\right), \qquad\mbox{for $j=1,\ldots, s$,}\\
y_1 &= y_0 + \sum_{\ell=1}^s b_{\ell} K_\ell,
\end{align*}
where $a_{j,\ell}, b_\ell \in \Ra$. 
\end{definition}
For an arbitrary time step $h$, an integration step is performed by applying the above method to the scaled vector field $f\mapsto hf$. Note that the coefficients $a_{j,\ell}, b_\ell \in \Ra$ are polynomials in $h$ under this scaling. 
A simple example of an aromatic RK method is
\begin{equation}
	\label{eq:equiforwardEuler}
	\Meth(f)(x_0) = x_0 + \paren[\big]{1 + \alpha\Div(hf)(x_0)}hf(x_0),
\end{equation}
where $\alpha$ is an arbitrary real constant. 
This method is an aromatic version of  the forward Euler method, with  Butcher tableau
\begin{equation}
\begin{array}{c|c}
0 & 0 \\
\hline
& 1 + \alpha\begin{tikzpicture}[setree]\placeroots{1}\jointrees{1}{1}\end{tikzpicture} \\
\end{array}
\end{equation}

\subsection{Open problems and future research}

We have defined a generalisation of B-series in very broad terms.
We answered in particular the question of what restrictions are imposed by the locality and equivariance requirements.
This, however, raises a number of interesting questions.

Basically, every statement on B-series could be examined for the aromatic B-series counterpart.
For instance, we do not know the answer to the following questions
\begin{itemize}
	\item 
		What is the equivalent for aromatic B-series of conditions for symplecticity, or energy preservation?
	\item
		What becomes of the standard no-go theorems for Runge--Kutta methods?
\end{itemize}

For instance, we know that there are aromatic B-series that are unconditionally divergence free (which is impossible for B-series \cite{IsQuTs07}).
For instance we have

  \begin{equation}
	  \Div\paren[\Big]{
  \begin{tikzpicture}[setree, baseline]
	  \placeroots{2}[.3]
	  \children[1]{child {node{}}}
	  \joinlast{2}{2}
  \end{tikzpicture}
  + 
  \begin{tikzpicture}[setree, ]
	  \placeroots{1}[.3]
	  \children[1]{child{node{}} child{node{}}}
  \end{tikzpicture}
  -
  \begin{tikzpicture}[setree, baseline]
	  \placeroots{2}[.3]
	  \children[2]{child{node{}}}
	  \joinlast{2}{2}
  \end{tikzpicture}
  -
  \begin{tikzpicture}[setree, baseline]
	  \placeroots{3}[.3]
	  \jointrees{2}{3}
  \end{tikzpicture}
  } = 0
  .
  \end{equation}
  What's more, we are able to generate \emph{all} such divergence free expressions \cite{bicomplex}.

We also have an answer to the following fundamental question \cite{bseries}:
\begin{quote}
	\emph{What extra requirement besides locality and affine equivariance ensures that an aromatic B-series is in fact a B-series?}
\end{quote}

\section*{Acknowledgements}

The authors would like to thank Robert McLachlan for many comments and discussions.
This research was supported by the Spade~Ace~Project, 
by a Marie Curie International Research Staff Exchange Scheme Fellowship within the 7th European Community Framework Programme,
and by the J.C.~Kempe memorial fund.

\bibliographystyle{plainnat}
\bibliography{ref}

\end{document}